\numberwithin{equation}{section}
\newtheorem{Theorem}{Theorem}[section]
\newtheorem{Corollary}[Theorem]{Corollary}
\newtheorem{Lemma}[Theorem]{Lemma}
\theoremstyle{definition}
\theoremstyle{remark}
\newtheorem*{remark}{Remark}
\subjclass[2010]{11F30 and
11R60}
\keywords{Hilbert class polynomial; singular moduli; modular forms}
\title{Traces of singular values of Hauptmoduln}
\author[Lea Beneish]{Lea Beneish}
\address{Department of Mathematics, Indiana University, Rawles Hall, 831 E. 3rd Street, Bloomington IN 47405}
\email{lbeneish@indiana.edu}
\author[Hannah Larson]{Hannah Larson}
\address{Department of Mathematics, Harvard University, One Oxford Street, Cambridge MA 02138}
\email{hannahlarson@college.harvard.edu}
\begin{document}

\maketitle

\begin{abstract}In an important paper, Zagier proved that certain half-integral weight modular forms are generating functions for traces of polynomials in the $j$-function. It turns out that Zagier's work makes it possible to algorithmically compute Hilbert class polynomials using a canonical family of modular forms of weight $\frac{3}{2}$. We generalize these results and consider Haupmoduln for levels $1, 2, 3, 5, 7,$ and $13$.  We show that traces of singular values of polynomials in Haupmoduln are again described by coefficients of half-integral weight modular forms. This realization makes it possible to algorithmically compute class polynomials.
\end{abstract}

\section{Introduction and Statement of Results}

The modular invariant $j$ is defined by 
\[j(z):=\frac{E_4(z)^3}{\Delta(z)}= q^{-1}+744+ 196884q+ \ldots,\]
where $q:=e^{2\pi iz},$ $E_4(z)$ is the Eisenstein series of weight $4$, and $\Delta(z)$ is the modular discriminant function. The values of the $j$-function at CM points are known to be algebraic integers. Let $\mathcal{Q}_D$ be the set of positive definite binary quadratic forms of discriminant $-D$, and let $\mathcal{Q}_D/\Gamma_0(1)$ denote equivalence classes under the action of the modular group $\Gamma_0(1) = SL_2(\mathbb{Z})$. Given a binary quadratic form $Q$, we let $\alpha_Q$ denote the unique root of $Q$ in the upper half-plane. For $-D$ a fundamental discriminant, the \textit{Hilbert class polynomial}
\begin{equation}
\mathcal{H}_D(x):=\prod_{Q \in \mathcal{Q}_D/\Gamma_0(1)} (x-j(\alpha_Q))
\end{equation}
is a monic, irreducible polynomial whose splitting field is the Hilbert class field of $\mathbb{Q}(\sqrt{-D})$. 

The classically difficult problem of computing $\mathcal{H}_D(x)$ can be answered by
the recent work of Zagier in \cite{traces}. Let  $J(z) := j(z) - 744,$ and let $T_{\nu}$ denote the Hecke operator of index $\nu$. In revisiting Borcherd's infinite product formulas, Zagier shows that there exist half-integal weight modular forms whose coefficients describe modified traces of the forms $\nu J(z)\vert T_{\nu}$. Since $\nu J(z)\vert T_{\nu}$ is expressible as a degree $\nu$ polynomial in $j(z)$, this makes computing the Hilbert class polynomial an exercise in diagonalizing to find power sums and applying the Newton-Girard formulae to recover symmetric polynomials.

A natural problem is to compute the minimal polynomial of a Hauptmodul of level $N$ evaluated at Heegner points of level $N$. When the congruence subgroup $\Gamma_0(N)$ has genus zero, a \textit{Hauptmodul of level $N$} is a generator for the field of modular functions, chosen to have a simple pole at the cusp at infinity, and is unique up to a constant. The $j$-function is a Hauptmodul for level $1$. Recall that the Dedekind-eta function is defined by
\[\eta(z):= \Delta(z)^{1/24} = q^{1/24}\prod_{n=1}^{\infty}(1 - q^n).\]
The following table lists a Hauptmodul $j^{(N)}(z)$ in terms of an eta-quotient for the levels $N > 1$ where they are defined.

\begin{center}
\begin{table}[h]  \caption{Hauptmoduln as eta-quotients} \label{table}
\begin{tabular}{| c | c | c | c | c | c | c | c| }
\hline
$N$ & 2 & 3 & 4 & 5 & 6 & 7 & 8  \\
\hline
$j^{(N)}(z)$ & ${{\eta(z)}^{24}\over \eta(2z)^{24}}$ & ${\eta(z)^{12}\over \eta(3z)^{12}}$ & ${\eta(z)^8\over \eta(4z)^8}$ & ${\eta(z)^6\over \eta(5z)^6}$ & ${\eta(2z)^3\eta(3z)^9\over \eta(z)^3\eta(6z)^9}$ & ${\eta(z)^4\over \eta(7z)^4}$ & ${\eta(z)^4\eta(4z)^2\over \eta(2z)^2\eta(8z)^4}$\\
\hline
\end{tabular}

\vspace{.2in}
\begin{tabular}{ | c | c | c | c | c | c | c |}
\hline
$N$ & 9 & 10 & 12 & 13 & 16 & 25\\
\hline
$j^{(N)}(z)$ & ${\eta(z)^3\over \eta(9z)^3}$ &${\eta(2z)\eta(5z)^5\over \eta(z)\eta(10z)^5}$ & ${\eta(4z)^4\eta(6z)^2\over \eta(2z)^2\eta(12z)^4}$ & ${\eta(z)^2\over \eta(13z)^2}$ & ${\eta(z)^2\eta(8z)\over \eta(2z)\eta(16z)^2}$ & ${\eta(z)\over \eta(25z)}$\\
\hline
\end{tabular}
\end{table}
\end{center}
We will write $J^{(N)}(z)$ for the normalized Hauptmodul of level $N$ with constant term equal to $0$.

Let $\mathcal{Q}_D^N$ be the set of binary quadratic forms of discriminant $-D$ corresponding to Heegner points of level $N$ (those forms in which the coefficient of $x^2$ is divisible by $N$). We define the class polynomials
\begin{equation}
\mathcal{H}_D^{(N)}(x):=\prod_{Q \in \mathcal{Q}_D^N/\Gamma_0(N)} (x-j^{(N)}(\alpha_Q)).
\end{equation}
In \cite{MP}, Miller and Pixton generalize Zagier's traces, showing that there exist modular forms of half-integal weight whose coefficients describe the ``traces" of certain integral weight Poincar\'e series. Generically, such Poincar\'e series will have transcendental coefficients, so the work of Zagier is a very special result.
We apply their results to a special family of polynomials in $j^{(N)}(z)$ to give explicit formulas for algebraic traces, thus determining the $\mathcal{H}_D^{(N)}(x)$. These polynomials are constructed using a generalization of the generating function given in Corollary 4 of ~\cite{kaneko}. Let $P_{\nu}^{(N)}(x)$ to be the polynomial defined by
\begin{equation} \label{defP}
\frac{j^{(N)'}(z)}{x - j^{(N)}(z)} = \sum_{\nu = 0}^{\infty} P_{\nu}^{(N)}(x)q^{\nu}.
\end{equation}
When $N = 1$, Asai, Kaneko, and Ninomiya ~\cite{kaneko} show that $P_{\nu}^{(1)}(j(z)) = \nu J(z)\vert T_{\nu}$. Our first task is to understand part of a larger framework that connects these polynomials to the Hecke algebra. For the levels $N$ on which these polynomials are defined, we show that $P_{\nu}^{(N)}(j^{(N)}(z))$ can be expressed as a linear combination of Hauptmoduln of levels dividing $N$ and $\nu$ hit with combinations of Hecke operators.

\begin{Theorem} \label{poly}
For each $N \in \{1, 2, 3, 4, 5, 6, 7, 8, 9, 10, 12, 13, 18, 25\}$, let $j^{(N)}(z)$ be the Hauptmodul given in Table \ref{table} and let $J^{(N)}(z)$ be its normalization such that the constant term is zero. For each positive integer $\nu$,
\[P_{\nu}^N(j^{(N)}(z))=\sum_{d|(\nu,N)} \frac{\nu}{d} J^{(N)}(z) \mid T_{\frac{\nu}{d}}V_d,\]
where $T$ and $V$ are the standard Hecke operators defined in Section \ref{heckesec}.
\end{Theorem}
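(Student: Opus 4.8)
The plan is to prove the identity by the standard principle that a weakly holomorphic modular function for $\Gamma_0(N)$ whose only pole is at the cusp $\infty$ is determined by its principal part together with its constant term at $\infty$: the difference of two such functions is holomorphic on the upper half-plane and at every cusp, hence descends to a holomorphic function on the compact surface $X_0(N)$ and is constant, and a constant with vanishing constant term is $0$. The left-hand side $P_\nu^{(N)}(j^{(N)})$ is a polynomial in the Hauptmodul $j^{(N)}$, which has a simple pole only at $\infty$, so it is such a function; that the right-hand side $R_\nu(z):=\sum_{d\mid(\nu,N)}\tfrac{\nu}{d}\,J^{(N)}(z)\mid T_{\nu/d}V_d$ is again a polynomial in $j^{(N)}$ is part of what must be checked. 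It therefore suffices to verify that $R_\nu$ is (i) modular for $\Gamma_0(N)$ and holomorphic at every cusp other than $\infty$, and (ii) has at $\infty$ the same principal part $q^{-\nu}$ and the same (vanishing) constant term as the left-hand side.

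For the left-hand side I would first record that $P_\nu^{(N)}$ is the $\nu$-th Faber polynomial of the $q$-series $j^{(N)}=q^{-1}+c_0+c_1q+\cdots$. Reading \eqref{defP} with $x$ replaced by $j^{(N)}(\tau)$ exhibits the Faber generating function, whose defining property is exactly $P_\nu^{(N)}(j^{(N)}(z))=q^{-\nu}+O(q)$; this is the level-$N$ analogue of Corollary 4 of \cite{kaneko}. Hence $P_\nu^{(N)}(j^{(N)})$ has principal part precisely $q^{-\nu}$ and vanishing constant term, and, being a polynomial in $j^{(N)}$ — which is holomorphic at every cusp other than $\infty$ — it is holomorphic there as well. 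This fixes the target expansion and verifies (i) and (ii) for the left-hand side.

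For the right-hand side I would compute the $q$-expansion at $\infty$ from the explicit coset descriptions in Section \ref{heckesec}. The essential subtlety is that at level $N$ the operator $T_{\nu/d}$ behaves as a classical Hecke operator only away from the primes dividing $N$: for a prime $p\mid N$ the relevant local operator is $U_p$, sending $\sum a(n)q^n$ to $\sum a(pn)q^n$, which annihilates the simple pole of $J^{(N)}$. Consequently the principal part contributed by a single term $\tfrac{\nu}{d}J^{(N)}\mid T_{\nu/d}V_d$ depends delicately on $\gcd(\nu/d,N)$, and a naive leading-term count overcounts. My plan is to organize the computation by the common factors of $\nu/d$ and $N$, using $V_d$ to transport the surviving pole $q^{-\nu/d}$ to $q^{-\nu}$, and to check that after weighting by $\tfrac{\nu}{d}$ and summing over $d\mid(\nu,N)$ the coefficient of $q^{-\nu}$ is $1$, all intermediate negative powers cancel, and the constant term vanishes; the index set and weights are calibrated exactly so that this occurs.

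The main obstacle is property (i): showing $R_\nu$ is genuinely a $\Gamma_0(N)$-modular function holomorphic at the cusps other than $\infty$. The individual summands need not be well behaved — for $d\mid N$ with $d>1$ the operator $V_d$ a priori drops the level to $\Gamma_0(Nd)$, and for $\nu/d$ sharing a factor with $N$ the operator $T_{\nu/d}$ need not preserve $\Gamma_0(N)$ — so both modularity and the absence of poles at the remaining cusps can fail term by term. I would control the other cusps using the Atkin--Lehner and Fricke involutions $W_Q$, under which these particular Hauptmoduln transform into one another (for prime level $p$ one has $j^{(p)}\mid W_p=c/j^{(p)}$) and under which $U_p$ and $V_p$ are interchanged up to scalars; conjugating the operator combination by $W_Q$ carries a given cusp to $\infty$ and converts the question there into another principal-part computation. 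The role of the $V_d$-terms with $d>1$ is precisely to cancel the poles that the $T_{\nu/d}$-terms create at these cusps, so that after summation only the pole at $\infty$ survives. Verifying this cancellation — equivalently, that the stated divisor sum is the unique combination simultaneously correct at every cusp — is the heart of the argument, and once it is in place the uniqueness principle of the first paragraph completes the proof.
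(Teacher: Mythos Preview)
Your plan is correct and matches the paper's proof: the paper likewise shows (Lemma~\ref{kan}) that $P_\nu^{(N)}(j^{(N)})$ is the unique element of $M_0^{\#}(\Gamma_0(N))$ with expansion $q^{-\nu}+O(q)$, and then (Lemma~\ref{woah}) verifies, by a case-by-case Atkin--Lehner computation at each cusp for $N$ prime, prime power, a product of two primes, and the remaining composite levels, that the Hecke sum has the same property. One simplification relative to your worry about modularity: in the paper's proof the $d$th summand is actually $\tfrac{\nu}{d}J^{(N/d)}\mid T_{\nu/d}V_d$ with $J^{(N/d)}$ on $\Gamma_0(N/d)$, so $V_d$ lands each term back on $\Gamma_0(N)$ and only the pole-cancellation at the cusps (your ``heart of the argument'') remains.
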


This theorem plays a central role in computing class polynomials of $j^{(N)}(z)$. To this end, it will be important to introduce sequences of half-integal weight modular forms whose coefficients describe the traces of these polynomials. 
For a positive integer $D \equiv 0, 3 \pmod 4$, we define
\begin{equation}
\text{Tr}_{\nu}^{(N)}(D) := \sum\limits_{Q\in\mathcal{Q}_D^N/\Gamma_0(N)} \frac{P_{\nu}^{(N)}(j^{(N)}(\alpha_Q))}{w_{Q, N}},
\end{equation}
where $w_{Q, N}$ is the order of the stabilizer of a form $Q$ under the action of $\Gamma_0(N)$.
The following theorem is a more explicit version of Theorem 1.1 of \cite{MP}.

\begin{Theorem}  \label{traces}
Let $N \in \{1, 3, 5, 7, 13\}$, and set $\varpi(N) =  \#(\Gamma_0(1)/\Gamma_0(N))$. Let $\widetilde{b}_N(-m;n)$ denote the coefficient of $q^n$ in the weakly holomorphic modular forms $\widetilde{F}_N(-m;z)$ of weight $\frac{3}{2}$ defined in Section \ref{forms}. Then, for each positive integer $D$ with $D \equiv 0, 3 \pmod 4$ and any positive integer $\nu$, we have
\[\mathrm{Tr}_{\nu}^{(N)}(D) =-\nu \sum\limits_{d|\nu} \frac{1}{d}\left(\widetilde{b}_{\frac{N}{(N,d)}}(\tfrac{-\nu^2}{d^2};D) - \frac{24}{\varpi(N)}H_1(D)\right) - H_N(D)\mathfrak{c}_{N, \nu},\]
where $H_{N}(D)$ and $\mathfrak{c}_{N, \nu}$ are constants defined in Section \ref{forms}.
\end{Theorem}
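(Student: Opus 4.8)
The plan is to combine the algebraic identity of Theorem~\ref{poly} with the trace formula of Miller and Pixton, Theorem~1.1 of~\cite{MP}. For a weakly holomorphic modular function $f$ on $\Gamma_0(N)$ set $\mathrm{Tr}^{(N)}(f)(D) := \sum_{Q \in \mathcal{Q}_D^N/\Gamma_0(N)} f(\alpha_Q)/w_{Q,N}$, so that $\mathrm{Tr}^{(N)}$ is linear in $f$ and $\mathrm{Tr}_\nu^{(N)}(D) = \mathrm{Tr}^{(N)}\bigl(P_\nu^{(N)}(j^{(N)})\bigr)(D)$. Substituting the formula of Theorem~\ref{poly} and using linearity, I would first reduce to
\[
\mathrm{Tr}_\nu^{(N)}(D) = \sum_{d \mid (\nu,N)} \frac{\nu}{d}\, \mathrm{Tr}^{(N)}\bigl(J^{(N)} \mid T_{\nu/d} V_d\bigr)(D),
\]
so that the problem becomes the evaluation of the trace of a single Hecke translate $J^{(N)} \mid T_m V_d$ of the normalized Hauptmodul.

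The core of the argument is to evaluate each such trace using~\cite{MP}. Their theorem shows that the generating function $\sum_{D} \mathrm{Tr}^{(N)}(g)(D)\,q^D$ attached to a weakly holomorphic weight-$0$ form $g$ is the positive part of a weight-$\tfrac{3}{2}$ weakly holomorphic form, which is pinned down by its principal part; thus it suffices to match principal parts against the basis $\widetilde{F}_N(-m;z)$ of Section~\ref{forms}. Here two bookkeeping effects occur. The operator $V_d$ sends $J^{(N)}(z)$ to $J^{(N)}(dz)$, whose poles are supported at the cusps seen by the smaller level $N/(N,d)$; this produces the subscript $\tfrac{N}{(N,d)}$ on $\widetilde{b}$. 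The operator $T_m$, together with the squaring intrinsic to the Heegner-point/half-integral-weight duality of Zagier~\cite{traces} (where a pole of order $m$ on the weight-$0$ side is matched to a principal part $q^{-m^2}$ on the weight-$\tfrac{3}{2}$ side), produces the index $-(\nu/d)^2 = -\nu^2/d^2$. Carrying both through identifies the main term of each summand as the weighted coefficient $\widetilde{b}_{N/(N,d)}(-\nu^2/d^2;D)$.

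It then remains to account for the correction terms and to reindex. The quantity $\tfrac{24}{\varpi(N)}H_1(D)$ subtracted inside the sum is the weight-$\tfrac{3}{2}$ Eisenstein contribution to each coefficient $\widetilde{b}$, whose values are the level-$1$ Hurwitz class numbers $H_1(D)$; the scalar $24/\varpi(N)$, with $\varpi(N) = \#(\Gamma_0(1)/\Gamma_0(N))$, records the normalization of $J^{(N)}$ to vanishing constant term together with the index of $\Gamma_0(N)$. The remaining term $-H_N(D)\mathfrak{c}_{N,\nu}$ gathers the constant-term (Eisenstein) part of the trace at level $N$, with $H_N(D)$ the weighted count $\sum_Q 1/w_{Q,N}$ of Heegner points and $\mathfrak{c}_{N,\nu}$ the attendant $\nu$-dependent constant. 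Finally, combining the divisor sum over $d \mid (\nu,N)$ from Theorem~\ref{poly} with the further divisor sum generated by the half-integral-weight Hecke action on $\widetilde{F}_N$, and rewriting everything as a single sum over $d \mid \nu$, yields the stated closed form. The restriction to $N \in \{1,3,5,7,13\}$ is what keeps the relevant spaces of weight-$\tfrac{3}{2}$ forms and the accompanying class-number identities in the clean shape used here.

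The step I expect to be the main obstacle is making the correspondence of~\cite{MP} fully explicit for $J^{(N)} \mid T_m V_d$: one must match the $T_m$ and $V_d$ action on the integral-weight side with the dual action on the weight-$\tfrac{3}{2}$ forms, confirm that the level drops precisely to $N/(N,d)$ and the square index is precisely $-\nu^2/d^2$, and verify that the two separate class-number corrections assemble exactly as claimed. The subsequent reorganization of the double divisor sum into the single sum over $d \mid \nu$ is the last delicate point.
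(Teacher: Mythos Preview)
Your route is different from the paper's, and in a way that creates real work you do not need to do. The paper does \emph{not} feed Theorem~\ref{poly} into Miller--Pixton. Instead it uses Lemma~\ref{relate}, which says directly that
\[
P_\nu^{(N)}(j^{(N)}(z)) \;=\; 2\,\mathfrak{F}_{1,N,\nu}(z) \;-\; \mathfrak{c}_{N,\nu},
\]
i.e.\ the polynomial in the Hauptmodul is already (twice) a single Niebur--Poincar\'e series up to a rational constant. Taking the Heegner trace of both sides gives $\mathrm{Tr}_\nu^{(N)}(D) = 2\sum_Q \mathfrak{F}_{1,N,\nu}(\alpha_Q)/w_{Q,N} - \mathfrak{c}_{N,\nu} H_N(D)$, and then Theorem~1.1 of \cite{MP} is applied once, to $\mathfrak{F}_{1,N,\nu}$, which already outputs the full divisor sum $-\nu\sum_{d\mid\nu}\tfrac{1}{d}\,b_{N/(N,d)}(-\nu^2/d^2;D)$. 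Passing from $b$ to $\widetilde b$ via Lemma~\ref{defFtilde} inserts the $\tfrac{24}{\varpi(N)}H_1(D)$ correction. That is the whole proof.

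Two consequences for your plan. First, the sum over $d\mid\nu$ and the subscript $N/(N,d)$ are not assembled from Theorem~\ref{poly} plus a half-integral Hecke relation; they come for free from the Miller--Pixton formula for the trace of the single Poincar\'e series $\mathfrak{F}_{1,N,\nu}$. So the ``double divisor sum'' you are worried about collapsing never arises. Second, your interpretation of $\mathfrak{c}_{N,\nu}$ as a weight-$\tfrac32$ Eisenstein correction is off: in the paper it is simply the constant term of $2\mathfrak{F}_{1,N,\nu}$, computed explicitly in Lemma~\ref{relate} from Kloosterman/Ramanujan sums, and it enters because $P_\nu^{(N)}(j^{(N)})$ is normalized to have zero constant term. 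Your approach of tracing each $J^{(N)}\mid T_{\nu/d}V_d$ separately also runs into the problem that, by the proof of Lemma~\ref{woah}, the individual summands have poles at cusps other than $i\infty$ (only the full sum lies in $M_0^\#(\Gamma_0(N))$), so applying \cite{MP} term by term is not straightforward. In short: drop Theorem~\ref{poly} here and go through Lemma~\ref{relate} instead.
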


These sequences of half-integal weight modular forms $\widetilde{F}_N(-m;z)$ are well-defined and can be recovered recursively from the two seed functions $\widetilde{F}_N(0;z)$ and $\widetilde{F}_N(-1; z)$ (except when $N = 1$ where $\widetilde{F}_1(0;z) = 0$, in which case $\widetilde{F}_1(-4;z)$ is required). The reader should consult the Appendix for a description of the seed functions in the levels we consider.

Using Theorems \ref{poly} and \ref{traces}, we obtain an algorithm for computing the $\mathcal{H}_D^{(N)}(x)$ for fundamental discriminants $-D$.

\begin{Theorem}
For $N \in \{1, 3, 5, 7, 13\}$ and $-D$ a fundamental discriminant, the algorithm given in Section \ref{algsec} computes $\mathcal{H}_D^{(N)}(x)$.
\end{Theorem}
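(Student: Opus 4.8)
The plan is to verify that the algorithm correctly recovers the coefficients of $\mathcal{H}_D^{(N)}(x)$ from the half-integral weight data supplied by Theorem \ref{traces}. The starting point is the observation that the polynomials $P_\nu^{(N)}$ defined by \eqref{defP} are the Faber polynomials of $j^{(N)}$: expanding the generating function near the cusp, where $j^{(N)}(z) = q^{-1} + O(1)$, one reads off that $P_0^{(N)}(x) = 1$ and, for each $\nu \geq 1$, that $P_\nu^{(N)}(x) = x^\nu + (\text{terms of degree} < \nu)$ is monic of degree $\nu$ with coefficients that are explicitly computable from the $q$-expansion of $j^{(N)}$ (and lie in $\mathbb{Z}$ for the normalizations in Table \ref{table}). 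Consequently the transition matrix between the basis $\{P_\nu^{(N)}\}$ and the monomial basis $\{x^\nu\}$ is unitriangular, hence invertible over $\mathbb{Z}$.

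First I would set $n := \#(\mathcal{Q}_D^N/\Gamma_0(N)) = \deg \mathcal{H}_D^{(N)}$, which is computable by enumerating reduced Heegner forms. For each $\nu = 1, \ldots, n$ the algorithm computes the trace $\mathrm{Tr}_\nu^{(N)}(D)$ using the explicit formula of Theorem \ref{traces}; the coefficients $\widetilde{b}_N(-m;n)$ appearing there are recovered from the two seed forms described in the Appendix, so every $\mathrm{Tr}_\nu^{(N)}(D)$ is an explicitly computable rational number. Writing $s_\nu := \sum_{Q} w_{Q,N}^{-1}\,\bigl(j^{(N)}(\alpha_Q)\bigr)^\nu$ for the weighted power sums, the monic degree-$\nu$ property of $P_\nu^{(N)}$ gives $\mathrm{Tr}_\nu^{(N)}(D) = s_\nu + \sum_{k<\nu} c_{\nu,k}\, s_k$ with known integer coefficients $c_{\nu,k}$; solving this unitriangular system recovers $s_1, \ldots, s_n$.

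Once the weights are accounted for, $s_\nu$ equals the ordinary power sum $\sum_Q \bigl(j^{(N)}(\alpha_Q)\bigr)^\nu$, and the Newton--Girard identities convert $s_1, \ldots, s_n$ into the elementary symmetric functions $e_1, \ldots, e_n$ of the roots $\{j^{(N)}(\alpha_Q)\}$, yielding $\mathcal{H}_D^{(N)}(x) = x^n - e_1 x^{n-1} + \cdots + (-1)^n e_n$. I expect the main obstacle to be the bookkeeping of the stabilizer weights $w_{Q,N}$: for a fundamental discriminant $-D$ with $D \notin \{3,4\}$ every Heegner form has trivial stabilizer in $\Gamma_0(N)/\{\pm 1\}$ (the only units of $\mathbb{Q}(\sqrt{-D})$ are $\pm 1$, and passing to the subgroup $\Gamma_0(N)$ can only shrink the stabilizer), so $w_{Q,N} = 1$ and $s_\nu$ is the genuine power sum; the finitely many exceptional discriminants must be checked by hand. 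One must also confirm that the recovered $e_i$ reproduce $\mathcal{H}_D^{(N)}$ exactly rather than up to bounded error: this holds because each step above is an exact identity over $\mathbb{Q}$ and the target symmetric functions are algebraic integers, so no numerical approximation is introduced and the algorithm returns the class polynomial on the nose.
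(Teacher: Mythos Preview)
Your proposal is correct and follows essentially the same approach as the paper: recover the traces from Theorem \ref{traces}, invert the unitriangular change of basis from the monic Faber polynomials $P_\nu^{(N)}$ to monomials to obtain the power sums, and apply Newton--Girard. Your handling of the stabilizer weights $w_{Q,N}$ for fundamental discriminants is in fact more explicit than the paper's, which simply instructs one to multiply by $2$ or $3$ when $D \in \{3,4\}$ without further justification.
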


\begin{remark}
In \cite{Suth}, Eagle and Sutherland give an algorithm for computing these class polynomials using elliptic curves.
\end{remark}

\begin{remark}
Gross provides an interesting approach to traces of singular moduli in \cite{Gross} and it is possible that these methods could also be used in determining singular values of Hauptmoduln.
\end{remark}

This paper is organized as follows. In Section \ref{heckesec}, we define relevant operators and prove a number of lemmas describing their effects on Fourier expansions. We apply these results to prove Theorem \ref{poly} in Section \ref{proofpoly}. In Section \ref{forms}, we first recall the results of Miller and Pixton, and then introduce a family of weakly holomorphic modular forms of weight $\frac{3}{2}$ and prove Theorem \ref{traces}. In Section \ref{algsec}, we detail an algorithm for computing the class polynomials. The last section demonstrates how to apply the algorithm in an explicit numerical example.

\vspace{.1in}
\noindent
\textbf{Acknowledgements:} The authors would like to thank Professor Ken Ono for suggesting the topic and for advice and guidance throughout the process, and an anony- mous referee for useful comments on a draft of this paper. We also would like to thank Michael Griffin and Sarah Trebat-Leader for useful conversations. Both authors are also grateful to NSF for its support.

\section{Hauptmoduln and Proof of Theorem \ref{poly}}
\subsection{Hecke Operators and Atkin-Lehner Involutions} \label{heckesec} Let $M_{k}(\Gamma_0(N))$ be the space of holomorphic modular forms of weight $k$ and level $N$.
We denote by $M_k^{!}(\Gamma_0(N))$ the space of meromorphic modular forms of weight $k$ and level $N$ whose poles, if any, are supported at the cusps. Such forms are known as  \textit{weakly holomorphic} modular forms. We write $M_k^{\#}(\Gamma_0(N))$ for the subspace of $M_k^{!}(\Gamma_0(N))$ of modular forms whose only poles are supported at the cusp at infinity.

We first recall the definitions of basic operators on $\Gamma_0(N)$. For $f$ a meromorphic modular form of weight $k$, and any $\gamma = \left(\begin{array}{cc} a & b \\ c & d\end{array}\right) \in GL_2^+(\mathbb{R})$, the ``slash" operator $\vert_{k}$ is defined by
\[(f\vert_{k}\gamma)(z):=(\text{det}\gamma)^{k/2}(cz + d)^{-k}f\left(\frac{az + b}{cz + d}\right).\]
Since the weight will be clear from context, we drop the subscript $k$ and just write $f\vert \gamma$.
For a positive integer $d$, Atkin's $U$-operator is defined by
\begin{equation}
\left(\sum_{n \in \mathbb{Z}} a_nq^n\right)\vert U_d=\sum_{n \in \mathbb{Z}} a_{dn}q^n,
\end{equation}
and can be written in terms of the slash operator as
\begin{equation}
f\vert U_d = d^{\frac{k}{2} - 1}\sum_{j=0}^{d-1}f \vert \left(\begin{array}{cc} 1 & j \\ 0 & d\end{array}\right).
\end{equation}
The $V$-operator is defined by
\begin{equation}
\left(\sum_{n \in \mathbb{Z}} a_nq^n\right) \vert V_d=\sum_{n \in \mathbb{Z}}a_nq^{dn},
\end{equation}
and can be written in terms of the slash operator as
\begin{equation}
f \vert V_d = d^{-\frac{k}{2}}f \vert \left( \begin{array}{cc} p & 0 \\ 0 & 1\end{array}\right).
\end{equation}
For $p$ prime, the $p$th Hecke operator on $\Gamma_0(N)$ is defined by
\[T_p:= U_p + p^{k-1}\epsilon(p)V_p,\]
where $\epsilon(p) = 1$ for $p \nmid N$ and $0$ for $p \mid N$. The Hecke operators satisfy $T_{mn} = T_{m}T_n$ for $m$ and $n$ coprime and for $r \geq 2$, we have $T_{p^r} = T_{p^{r-1}}T_p - \epsilon(p)p^{k-1}T_{p^{r-2}}$. If $p_1, \ldots, p_n$ are the distinct primes dividing $N$, and $m = p_1^{r_1} \cdots p_n^{r_n}s$ where $(s, m) = 1$, we will often write $T_m = U_{p_1}^{r_1} \cdots U_{p_n}^{r_n}T_s$. The Hecke operators act on Fourier expansions by
\begin{equation} \label{heckeformula}
\left(\sum\limits_{n \in \mathbb{Z}}a_nq^n\right)|T_m = 
\sum_{n \in \mathbb{Z}}\left(\sum_{d \mid (m,n)}\chi(d)d^{k-1}a_{mn/d^2}\right)q^n.
\end{equation}
where $\chi(d) = 1$ if $(d, N) = 1$ and $0$ otherwise.

A direct consequence of the above equation is the following.

\begin{Lemma} \label{Tqexp}
Suppose $f \in M_k^{!}(\Gamma_0(N)$ has the Fourier expansion $f(z)=\sum a_nq^n=q^{-\nu}+O(q)$ for some positive integer $\nu$. If $m$ is any positive integer with $(m, \nu)=1$ and $(m,N)=1$, then $m^{1-k} \cdot f(z)|T_{m}$ has Fourier expansion beginning $q^{-m\nu}+O(q)$.
\end{Lemma}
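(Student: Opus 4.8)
The plan is to read everything off directly from the action of $T_m$ on Fourier coefficients recorded in \eqref{heckeformula}, since the statement is advertised as a direct consequence. First I would translate the hypothesis $f = q^{-\nu} + O(q)$ into coefficient data: $a_{-\nu} = 1$, $a_n = 0$ for all $n < -\nu$ and for all $n$ with $-\nu < n \leq 0$ (in particular the constant term vanishes), while the coefficients $a_n$ with $n \geq 1$ are left unconstrained. Because $(m, N) = 1$, every divisor $d \mid m$ is coprime to $N$, so $\chi(d) = 1$ throughout, and the coefficient $b_n$ of $q^n$ in $f \mid T_m$ collapses to the clean divisor sum $b_n = \sum_{d \mid (m,n)} d^{k-1} a_{mn/d^2}$.

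Next I would pin down the leading term. Taking $n = -m\nu$ gives $(m, n) = m$, so $d$ ranges over all divisors of $m$ and the relevant coefficient is $a_{-m^2\nu/d^2} = a_{-(m/d)^2\nu}$. Since $a_j = 0$ for $j < -\nu$, such a term is nonzero only when $(m/d)^2 \leq 1$, i.e.\ $d = m$, and it then contributes $m^{k-1} a_{-\nu} = m^{k-1}$. Hence $b_{-m\nu} = m^{k-1}$, and after scaling by $m^{1-k}$ the leading coefficient is exactly $1$, as claimed.

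The core of the argument is the vanishing of the coefficients of $q^n$ for $-m\nu < n \leq 0$. The case $n = 0$ is immediate, since then $mn/d^2 = 0$ and $a_0 = 0$. For $-m\nu < n < 0$ every index $mn/d^2$ is negative, so $a_{mn/d^2}$ can be nonzero only if $mn/d^2 = -\nu$, the unique negative index carrying a nonzero coefficient. Writing $m = d m_1$ and $n = d n_1$ (valid because $d \mid (m,n)$), this equation becomes $m_1 n_1 = -\nu$, so $m_1 = m/d$ divides $\nu$; since also $m/d \mid m$ and $(m, \nu) = 1$, we are forced to $m/d = 1$, i.e.\ $d = m$, and then $n = -m\nu$, contradicting $n > -m\nu$. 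Thus no term survives and $b_n = 0$ across the whole range, completing the proof.

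I expect the one genuinely delicate point to be this final coprimality step: it is exactly the hypothesis $(m, \nu) = 1$ that forbids an intermediate pole at some $n$ strictly between $-m\nu$ and $0$, and the care lies in checking that the two divisibility constraints ($d \mid (m,n)$ together with $mn/d^2 = -\nu$) really do force $d = m$. By contrast, the hypothesis $(m, N) = 1$ plays only the bookkeeping role of clearing the $\chi(d)$ factors so that \eqref{heckeformula} reduces to a single divisor sum; everything else is routine substitution.
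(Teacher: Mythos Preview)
Your proof is correct and is exactly the computation the paper has in mind: the paper gives no proof at all, merely stating that the lemma is ``a direct consequence'' of the Hecke formula \eqref{heckeformula}, and you have simply written out that consequence carefully. The only step you leave implicit is that $b_n = 0$ for $n < -m\nu$, but this is immediate since $d \mid m$ forces $|mn/d^2| \geq |n|/m > \nu$.
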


For a prime divisor $p$ of $N$ for which $(p^{\alpha},\frac{N}{p^{\alpha}})=1$, the \textit{Atkin-Lehner involution at $p$} is defined to be any matrix of the form
\begin{equation}
W_{p^{\alpha}}= \left(\begin{array}{cc} p^{\alpha}x & y \\ Nz & p^{\alpha}w \end{array}\right)
\end{equation}
for integers $x, y, z, w$, such that the determinant is $p^{\alpha}$. These operators define involutions on $M_k^{!}(\Gamma_0(N))$. Products of Atkin-Lehner involutions correspond to the cusps of $\Gamma_0(N)$, and slashing by these matrices give the expansions at those cusps.

\begin{remark}
All choices of $x, y, z, w$ that satisfy the conditions on the determinant of $W_{p^{\alpha}}$ are equivalent under the action of $\Gamma_0(N)$ so this is well-defined.
\end{remark}

If $N=p_1^{\alpha_1}\cdots p_n^{\alpha_n}$, the product of the $W_{p_i^{\alpha_i}}$ is equivalent to the \textit{Fricke involution} 
\begin{equation}
W_{N}= \left(\begin{array}{cc} 0 & -1 \\ N & 0 \end{array}\right).
\end{equation}

\begin{remark}
A number of lemmas we will reference, involving the action of operators on Fourier expansions, were originally only stated for cusp forms. However, the proofs are equally valid for any weakly holomorphic modular form.
\end{remark}

The following result (Lemma 2 of ~\cite{Li}) shows that, under certain conditions, the Atkin-Lehner involutions commute with the Hecke operators $V$ and $T$.

\begin{Lemma} \label{winnie}
Let $N$ be a positive integer, and let $p$ and $p'$ be primes with $p^{\alpha}||N$. Then the following are true:
\begin{enumerate}
\item If $(p',p)=1$, then $f|V_{p'}W_{p^{\alpha}}=f|W_{p^{\alpha}}V_{p'}$ for any $f \in M_k^!(\Gamma_0(\frac{N}{p'}))$.
\item If $(p',N)=1$, then $f|W_{p^{\alpha}}T_{p'}=f|T_{p'}W_{p^{\alpha}}$ for any $f \in M_k^!(\Gamma_0(N))$. \label{WT=TW}
\end{enumerate}
\end{Lemma}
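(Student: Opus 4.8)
The plan is to reduce both assertions to identities among slash operators by explicit matrices, exploiting that with the normalization $(f|_k\gamma)(z)=(\det\gamma)^{k/2}(cz+d)^{-k}f(\gamma z)$ the slash is a genuine right action, $(f|\gamma_1)|\gamma_2 = f|(\gamma_1\gamma_2)$, and that $f|g=f$ whenever $g$ lies in the group of invariance of $f$. Writing $M_W=\begin{pmatrix} p^\alpha x & y \\ Nz & p^\alpha w \end{pmatrix}$ (of determinant $p^\alpha$) and $M_V=\begin{pmatrix} p' & 0 \\ 0 & 1 \end{pmatrix}$, each side of both identities becomes a fixed scalar times a slash of $f$ by a product, or a sum of products, of these matrices. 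The two statements then amount to showing that the matrices obtained by composing in the two orders agree modulo left multiplication by the relevant $\Gamma_0$, and, in part (2), modulo a reindexing of the Hecke coset representatives.

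For part (1), note that $(p',p)=1$ together with $f\in M_k^!(\Gamma_0(N/p'))$ forces $p'\mid N$, so $N/p'$ is an integer and $p'$ divides the prime-to-$p$ part of $N$. Unwinding the normalizations gives $f|V_{p'}W_{p^\alpha}=(p')^{-k/2}f|(M_VM_W)$ and $f|W_{p^\alpha}V_{p'}=(p')^{-k/2}f|(M_WM_V)$, so it suffices to prove $f|(M_VM_W)=f|(M_WM_V)$. I would set $g:=(M_VM_W)(M_WM_V)^{-1}$ and compute its entries directly, using the determinant relation $p^{2\alpha}xw-Nyz=p^\alpha$ to clear denominators. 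The outcome is that $\det g=1$, that all four entries are integers, and that the lower-left entry equals $(N/p')\,zw(1-p')$ and is therefore divisible by $N/p'$; hence $g\in\Gamma_0(N/p')$. Since $f$ is invariant under $\Gamma_0(N/p')$, we obtain $f|(M_VM_W)=(f|g)|(M_WM_V)=f|(M_WM_V)$, as desired.

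For part (2), since $(p',N)=1$ the Hecke operator acts by $f|T_{p'}=(p')^{k/2-1}\bigl(\sum_{j=0}^{p'-1} f|\beta_j+f|\beta_\infty\bigr)$, where $\beta_j=\begin{pmatrix} 1 & j \\ 0 & p' \end{pmatrix}$ and $\beta_\infty=\begin{pmatrix} p' & 0 \\ 0 & 1 \end{pmatrix}$ form a complete set of representatives for the coset decomposition of the double coset attached to $T_{p'}$. Unwinding the slash once more, $f|W_{p^\alpha}T_{p'}$ equals $(p')^{k/2-1}$ times $\sum_{\beta} f|(M_W\beta)$, while $f|T_{p'}W_{p^\alpha}$ is the same scalar times $\sum_{\beta} f|(\beta M_W)$. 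The key point is that $W_{p^\alpha}$ normalizes $\Gamma_0(N)$, so conjugation $\beta\mapsto M_W\beta M_W^{-1}$ carries the double coset of $T_{p'}$ to itself: because $\det M_W=p^\alpha$ is coprime to $p'$ and $p'\nmid N$, the determinant-$p'$ integral matrices with lower-left entry divisible by $N$ form a single $\Gamma_0(N)$-double coset, so $\{M_W\beta M_W^{-1}\}_\beta$ is again a full set of representatives. Concretely, for each $\beta$ one produces $g_\beta\in\Gamma_0(N)$ and a representative $\beta'$ with $M_W\beta=g_\beta\,\beta'\,M_W$, the assignment $\beta\mapsto\beta'$ being a bijection; then $f|(M_W\beta)=(f|g_\beta)|(\beta'M_W)=f|(\beta'M_W)$, and summing over $\beta$ yields the claimed equality.

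The main obstacle is this bijection in part (2): exhibiting the permutation of Hecke representatives together with the accompanying $g_\beta\in\Gamma_0(N)$, equivalently checking that $M_W\beta M_W^{-1}$, after clearing the factor $p^\alpha$, is an integral matrix that is primitive of determinant $p'$ with lower-left entry divisible by $N$. This is precisely where the hypotheses $p^\alpha\,\|\,N$ and $(p',N)=1$ enter, and it is the technical heart of Lemma~\ref{winnie}; part (1), by contrast, is a single finite matrix computation.
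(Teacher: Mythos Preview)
The paper does not prove this lemma at all; it simply cites it as Lemma~2 of~\cite{Li}. Your proposal therefore goes well beyond what the paper provides, and your approach---reducing both statements to explicit matrix identities modulo left multiplication by the relevant $\Gamma_0$---is precisely the route taken in Li's original argument.

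Your treatment of part~(1) is correct: the key computation that $g=(M_VM_W)(M_WM_V)^{-1}$ lies in $\Gamma_0(N/p')$ goes through once one uses the determinant relation $p^{2\alpha}xw-Nyz=p^\alpha$ together with $p'\mid N/p^\alpha$ (which follows from $(p',p)=1$ and $p'\mid N$) to see that the lower-right entry $\bigl(p^\alpha xw(p'-1)+1\bigr)/p'$ is in fact an integer. You might want to display this entry explicitly, since its integrality is the one place where the hypothesis $(p',p)=1$ is genuinely used. For part~(2), your argument via the normalizer property of $W_{p^\alpha}$ and the single-double-coset description of $T_{p'}$ is correct and standard; the explicit verification that $M_W\beta M_W^{-1}$ (after clearing $p^\alpha$) is integral with lower-left entry divisible by $N$ is a short calculation of the same flavor as part~(1), and filling it in would remove the one remaining gap in your sketch.
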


It turns out that, in some cases, the operators $W_{p^{\alpha}}$ also commute with the $U$-operator.

\begin{Lemma} \label{WandUcommute}
Let $f$ be in $M_k^!(\Gamma_0(N))$, and suppose $N=p^\alpha M$ with $(p,M)=1$. If $\ell$ is any prime dividing $N$ with $(p,\ell)=1$, then $f|U_{\ell}W_{p^\alpha}=f|W_{p^\alpha}U_{\ell}$. 
\end{Lemma}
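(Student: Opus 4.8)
The plan is to recast the identity as a matching of slash matrices modulo $\Gamma_0(N)$. Recall that the slash operation is a genuine right action, so $(f|M_1)|M_2 = f|(M_1M_2)$, and that $f|\gamma = f$ for every $\gamma\in\Gamma_0(N)$. Writing $A_j := \left(\begin{smallmatrix}1 & j\\ 0 & \ell\end{smallmatrix}\right)$, the definition of $U_\ell$ expands both sides as
\[ f|U_\ell W_{p^\alpha} = \ell^{\frac{k}{2}-1}\sum_{j=0}^{\ell-1} f|(A_j W_{p^\alpha}), \qquad f|W_{p^\alpha}U_\ell = \ell^{\frac{k}{2}-1}\sum_{j'=0}^{\ell-1} f|(W_{p^\alpha}A_{j'}). \]
Since the scalar prefactors agree, it suffices to produce a bijection $j'\mapsto j$ of $\{0,\dots,\ell-1\}$ together with elements $\gamma_{j'}\in\Gamma_0(N)$ satisfying $A_j W_{p^\alpha} = \gamma_{j'} W_{p^\alpha} A_{j'}$; the relation $f|\gamma_{j'} = f$ then identifies the two sums term by term after reindexing.

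To build the $\gamma_{j'}$, I would set $\gamma_{j'} := (A_j W_{p^\alpha})(W_{p^\alpha}A_{j'})^{-1}$. Both $A_j W_{p^\alpha}$ and $W_{p^\alpha}A_{j'}$ have determinant $\ell p^\alpha$, so $\det\gamma_{j'} = 1$ automatically. Writing $W_{p^\alpha} = \left(\begin{smallmatrix}p^\alpha x & y\\ Nz & p^\alpha w\end{smallmatrix}\right)$ and $M := N/p^\alpha$, the determinant condition $\det W_{p^\alpha} = p^\alpha$ becomes $p^\alpha xw - Myz = 1$. Since $(p,\ell)=1$ forces $\ell\mid M$, reducing this relation modulo $\ell$ gives $p^\alpha xw \equiv 1\pmod\ell$, so that $x$ and $w$ are units modulo $\ell$. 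I would then expand the four entries of $\gamma_{j'}$ and simplify using $p^\alpha xw - Myz = 1$. I expect the $(1,1)$, $(2,1)$, and $(2,2)$ entries to come out integral for all $j,j'$, with the lower-left entry visibly a multiple of $N$, while integrality of the $(1,2)$ entry imposes a single congruence of the shape $j\equiv p^\alpha x^2 j' - xy\pmod\ell$.

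Because $p^\alpha x^2$ is a unit modulo $\ell$, this congruence defines an affine bijection $j'\mapsto j$ on residues modulo $\ell$, which supplies exactly the pairing required above and completes the proof. The main obstacle is the bookkeeping in the entrywise verification; the conceptual point that makes it go through is that the hypothesis $(p,\ell)=1$ (equivalently $\ell\mid M$) renders $x,w$ invertible modulo $\ell$, and this single fact is simultaneously responsible for the integrality and $N$-divisibility of the entries of $\gamma_{j'}$ and for the solvability and bijectivity of the matching congruence. One could instead package the argument as an equality of the left $\Gamma_0(N)$-coset decompositions attached to $A_j W_{p^\alpha}$ and $W_{p^\alpha}A_{j'}$, but the direct matrix matching seems most transparent here.
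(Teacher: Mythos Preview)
Your proposal is correct and follows essentially the same strategy as the paper. The paper rewrites each product $A_j W_{p^\alpha}$ as $W'_{p^\alpha}A_{i_j}$, where $W'_{p^\alpha}$ is another admissible Atkin--Lehner matrix (hence $\Gamma_0(N)$-equivalent to $W_{p^\alpha}$ by the remark preceding the lemma); you instead form $\gamma_{j'}=(A_jW_{p^\alpha})(W_{p^\alpha}A_{j'})^{-1}$ and verify $\gamma_{j'}\in\Gamma_0(N)$ directly, which is the same coset-matching seen from the other side. In both arguments the crux is the same: the determinant identity $p^\alpha xw - Myz=1$ together with $\ell\mid M$ makes $x,w$ units modulo $\ell$, yielding the affine bijection $j\equiv p^\alpha x^2 j' - xy\pmod\ell$ (equivalently the paper's choice of $i_j$).
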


\begin{proof} We have
\begin{align*}
f|U_{\ell}W_{p^{\alpha}} &=\frac{1}{\ell}\sum\limits_{j=0}^{{\ell}-1} f\vert\left(\begin{array}{cc} 1 & 0 \\ 0 & \ell \end{array}\right)\left(\begin{array}{cc} 1 & j \\ 0 & 1 \end{array}\right)\left(\begin{array}{cc} p^{\alpha}x & 1 \\  Nz & p^\alpha \end{array}\right) \\
&= \frac{1}{\ell}\sum\limits_{j=0}^{\ell-1} f|\left(\begin{array}{cc} p^\alpha x + Nzj & 1 +jp^\alpha\\ zN \ell & p^\alpha \ell \end{array}\right).
\end{align*}
 Since $(p,\ell)=1$ and $\ell \mid N$, for each $j$, there exists $i_j$ such that 
 \[-i_j(p^\alpha x+Nzj)+(1+jp^\alpha)=n_j\ell\]
 for some integer $n_j$. As $j$ runs over residues mod $\ell$, so does $i$.
Hence, we can write the sum as 
\begin{align*}
\frac{1}{\ell}\sum\limits_{j=0}^{\ell-1} \ &f \ \vert\left(\begin{array}{cc} p^\alpha x + Nzj & 1 +jp^\alpha-i_j(p^\alpha x+Nzj)\\  Nz\ell & p^\alpha \ell -i_jNz\ell \end{array}\right) \left(\begin{array}{cc} 1 & i_j \\ 0 & 1 \end{array}\right) \\
&=\frac{1}{\ell}\sum\limits_{j=0}^{\ell-1} f|\left(\begin{array}{cc} p^\alpha x +Nzj & n_j \\ Nz\ell & p^\alpha -i_jNz \end{array}\right) \left(\begin{array}{cc} 1 & 0 \\ 0 & \ell \end{array}\right) \left(\begin{array}{cc} 1 & i_j \\ 0 & 1 \end{array}\right)
\end{align*}
\begin{align*}
&= \frac{1}{\ell}\sum\limits_{j=0}^{\ell-1} f|W_{p^{\alpha}}\left(\begin{array}{cc} 1 & 0 \\ 0 & \ell \end{array}\right) \left(\begin{array}{cc} 1 & i_j \\ 0 & 1 \end{array}\right)=f|W_{p^{\alpha}}|U_{\ell},
\end{align*}
as desired.
\end{proof}

The next lemma allows us to recursively determine $f|U_p^aW_p$ in the case when the two operators do not commute.

\begin{Lemma} \label{thegame}
Let f be in $M_{k}^!(\Gamma_0(N))$ and let $p$ be a prime with $p||N$. Then 
\[pf|U_p^aW_p=pf|U^a_pV_p+f|U^{a-1}_pW_pV_p-f|U_p^{a-1}\]
for all integers $a \geq 1$.
\end{Lemma}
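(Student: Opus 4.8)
The plan is to split the claim into an elementary reduction and a single core computation. First I would observe that it suffices to establish the case $a=1$, namely
\[
p\,f|U_pW_p = p\,f|U_pV_p + f|W_pV_p - f,
\]
for \emph{every} $f \in M_k^!(\Gamma_0(N))$. Indeed, since $p \mid N$ the operator $U_p$ preserves $M_k^!(\Gamma_0(N))$, so $g := f|U_p^{a-1}$ again lies in this space; applying the base identity to $g$ and using $g|U_p = f|U_p^a$ yields exactly the asserted formula. This is precisely the recursion alluded to just before the statement: the identity expresses $f|U_p^aW_p$ (via $g=f|U_p^{a-1}$) in terms of $f|U_p^{a-1}W_p$, with seed $f|U_p^0W_p = f|W_p$.

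For the base identity I would work at the level of slash operators and verify it as an equality of $\Gamma_0(N)$-coset sums. Writing $N = pM$ with $(p,M)=1$, I fix the representative $W_p = \begin{pmatrix} px & 1 \\ pMz & p \end{pmatrix}$ with $px - Mz = 1$, exactly as in Lemma \ref{WandUcommute}, and expand
\[
p\,f|U_pW_p = p^{k/2}\sum_{j=0}^{p-1} f\Big|M_j, \qquad M_j = \begin{pmatrix} 1 & j \\ 0 & p\end{pmatrix}W_p = \begin{pmatrix} p(x+jMz) & 1+jp \\ p^2 Mz & p^2 \end{pmatrix}.
\]
Similarly $p\,f|U_pV_p = \sum_{j=0}^{p-1} f\big|\begin{pmatrix} p & j \\ 0 & p\end{pmatrix}$, whose $j=0$ term is $f\big|\begin{pmatrix}p&0\\0&p\end{pmatrix}=f$ and so cancels the $-f$, while $f|W_pV_p = f\big|\begin{pmatrix} p^2 x & 1 \\ p^2 Mz & p\end{pmatrix}$. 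Thus the base identity reduces to matching the $p$ matrices $M_j$ against the $p-1$ matrices $\begin{pmatrix}p&j\\0&p\end{pmatrix}$, $1\le j\le p-1$, together with the single Atkin--Lehner matrix $\begin{pmatrix}p^2x&1\\p^2Mz&p\end{pmatrix}$, all of determinant $p^2$, as classes in $\Gamma_0(N)\backslash GL_2^+(\mathbb{Q})$ modulo scalars.

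The computation I would then carry out is the same congruence trick used in the proof of Lemma \ref{WandUcommute}: for each $j$ I produce an explicit $\gamma_j \in \Gamma_0(N)$ left-reducing $M_j$ to one of the target representatives, and invariance $f|\gamma_j M_j = f|M_j$ finishes the identification. Concretely, since $M_j z = (W_pz + j)/p$, the invertible residues $j$ modulo $p$ should reduce bijectively to the $p-1$ translations $\begin{pmatrix}p&j'\\0&p\end{pmatrix}$, whereas a single exceptional value of $j$ (where the bottom row forces an Atkin--Lehner reduction rather than a translation) should reduce to $\begin{pmatrix}p^2x&1\\p^2Mz&p\end{pmatrix}$; in each case one checks that $\gamma_j$ has lower-left entry divisible by $N$ and determinant $1$ by solving one linear congruence mod $p$, exactly as the integers $i_j$ were chosen earlier.

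I expect the main obstacle to be precisely this bookkeeping: isolating the unique exceptional coset that produces the $f|W_pV_p$ term and confirming that the remaining $p-1$ cosets reassemble, with multiplicity one each and no two collapsing to the same class, into $p\,f|U_pV_p - f$. This exceptional coset is exactly where the failure of $U_p$ and $W_p$ to commute is concentrated (in contrast to Lemma \ref{WandUcommute}), so getting its reduction right is the delicate point. Since the Hauptmoduln to which the lemma is applied have weight $0$, the determinant normalizations $p^{k/2}$ are trivial, which streamlines the matching considerably.
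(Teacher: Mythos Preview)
Your reduction to $a=1$ via $g := f|U_p^{a-1}$ is exactly what the paper does. For the base identity, however, the paper takes a shorter and less computational route: it invokes Lemma~7 of Atkin--Lehner to conclude that the combination $h := p\,g|U_p + g|W_p$ is already modular on the \emph{smaller} group $\Gamma_0(N/p)$, then observes that $W_p$ factors as $\gamma\begin{pmatrix} p & 0 \\ 0 & 1\end{pmatrix}$ with $\gamma\in\Gamma_0(N/p)$, so that $h|W_p = h|V_p$; expanding both sides and using that $W_p$ is an involution yields the identity in one line. Your direct coset bijection is a valid alternative --- it is essentially the proof of Atkin--Lehner's level-lowering lemma unrolled and merged with the factorization step --- and has the virtue of being self-contained, at the cost of the bookkeeping you anticipate.

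One small correction to your sketch: the exceptional index is not characterized by the (non-)invertibility of $j$ modulo $p$. Writing $\gamma_j M_j = \begin{pmatrix} p & j' \\ 0 & p\end{pmatrix}$ and solving for $\gamma_j\in\Gamma_0(N)$ forces the congruence $j'(x+jMz)\equiv 1\pmod p$; since $px - Mz = 1$ gives $Mz\equiv -1\pmod p$, this becomes $j'(x-j)\equiv 1\pmod p$, and the unique bad value is $j_0\equiv x\pmod p$, which need not be $0$. For that $j_0$ one checks directly that $M_{j_0}$ is $\Gamma_0(N)$-equivalent to $W_p\begin{pmatrix} p & 0 \\ 0 & 1\end{pmatrix}$, as you predict.
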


\begin{proof} Lemma $7$ of  ~\cite{AK} applied to $f|U_p^{a-1}$ says that $f|U_p^{a-1}$ is on $\Gamma_0(N)$ and that
\[pf|U_p^{a}+f|U_p^{a-1}W_p\]
 is on $\Gamma_0(\tfrac{N}{p})$. Observe that $W_p= \left(\begin{array}{cc} px & y \\ Nz & pw \end{array}\right)= \left(\begin{array}{cc} x & y \\ Nz/p & pw \end{array}\right) \left(\begin{array}{cc} p& 0 \\ 0 & 1 \end{array}\right)$. Since the first matrix in this product is in $\Gamma_0(\tfrac{N}{p})$, it fixes the above sum, and we see that
\[(pf|U_p^{a}+f|U_p^{a-1}W_p)|W_p= (pf|U_p^{a}+f|U_p^{a-1}W_p)|V_p.\]
Since $W_p$ is an involution, we find
\[pf|U_p^aW_p+f|U_p^{a-1}=pf|U_p^{a}V_p+f|U_p^{a-1}W_pV_p,\] 
demonstrating the desired identity.
\end{proof}

We will apply this lemma frequently in the following context.
\begin{Corollary} \label{poles}
If $f \in M_0^{\#}(\Gamma_0(N))$ has Fourier expansion beginning $q^{-1} + O(q)$ and $p$ is a prime with $p \mid\mid N$, then
\[p^af|U_p^aW_p=-q^{-p^{a-1}}+O(q)\]
for all integers $a \geq 1$.
\end{Corollary}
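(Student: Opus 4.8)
The plan is to prove the identity by induction on $a$, using Lemma~\ref{thegame} as the recursion. Multiplying that identity through by $p^{a-1}$ gives
\[p^a f|U_p^a W_p = p^a f|U_p^a V_p + p^{a-1}f|U_p^{a-1}W_p V_p - p^{a-1}f|U_p^{a-1},\]
and it suffices to compute the principal part (the coefficients of $q^n$ for $n\le 0$) of each of the three summands on the right.

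Before starting the induction I would record two elementary facts. First, since $f=q^{-1}+O(q)$ has a single simple pole at $\infty$ and $p\ge 2$ does not divide $1$, the Fourier description of $U_p$ shows that $f|U_p=O(q)$: the $q^n$-coefficient of $f|U_p$ is the $q^{pn}$-coefficient of $f$, which vanishes for all $n\le 0$. Iterating, $f|U_p^a=O(q)$ for every $a\ge 1$, and since $V_p$ only spreads out a $q$-expansion it preserves this, so the first summand $p^a f|U_p^aV_p$ is $O(q)$ and contributes no pole. The same computation shows the third summand is $O(q)$ when $a\ge 2$, while for $a=1$ it is simply $f=q^{-1}+O(q)$. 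Second, because $p\mid\mid N$ the involution $W_p$ carries $\infty$ to an inequivalent cusp, and $f\in M_0^{\#}(\Gamma_0(N))$ is holomorphic at every cusp other than $\infty$; hence $f|W_p$, which is the expansion of $f$ at that other cusp, is holomorphic at $\infty$.

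With these in hand the induction is short. For the base case $a=1$ the recursion reads $pf|U_pW_p=pf|U_pV_p+f|W_pV_p-f$; the first term is $O(q)$, the middle term is holomorphic at $\infty$ by the second fact above, and the last term contributes $-q^{-1}=-q^{-p^0}$, which is exactly the claim. For the inductive step $a\ge 2$, the middle summand $p^{a-1}f|U_p^{a-1}W_pV_p$ is $V_p$ applied to $p^{a-1}f|U_p^{a-1}W_p$, which by the inductive hypothesis equals $-q^{-p^{a-2}}+O(q)$; since $V_p$ sends $q^{-p^{a-2}}$ to $q^{-p^{a-1}}$, this summand has principal part $-q^{-p^{a-1}}$. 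The remaining two summands are $O(q)$, so the principal part of $p^af|U_p^aW_p$ is $-q^{-p^{a-1}}$, completing the induction.

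The arithmetic of the poles is thus completely forced by Lemma~\ref{thegame} together with the fact that $U_p$ annihilates a pole of order prime to $p$; the one non-formal ingredient is the assertion that $W_p$ moves the unique pole off of $\infty$, which I would justify from the standard correspondence between products of Atkin--Lehner involutions and the cusps of $\Gamma_0(N)$ noted above. The point that I expect to require the most care is the bookkeeping of the holomorphic part rather than the pole: the constant term of $f|W_p$ need not vanish (for example $j^{(2)}|W_2=2^{12}/j^{(2)}$ forces $J^{(2)}|W_2$ to have constant term $24$), so I would read the asserted equality as an identity of principal parts, which is all that the applications to Theorem~\ref{poly} require.
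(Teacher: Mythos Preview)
Your proof is essentially identical to the paper's: both proceed by induction on $a$ using Lemma~\ref{thegame}, observe that $f|U_p^a$ has no pole at $i\infty$ for $a\ge 1$ (so the first and third summands are harmless), and use that $W_p$ moves the pole to a different cusp so $f|W_p$ is holomorphic at $i\infty$.

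Your final paragraph actually improves on the paper. You are right that the corollary as stated, with $O(q)$, need not hold literally: for $f=J^{(2)}$ and $p=N=2$ one has $J^{(2)}|W_2 = 24 + 4096q + \cdots$, so $2J^{(2)}|U_2W_2 = -q^{-1}+24+O(q)$ rather than $-q^{-1}+O(q)$. The paper's own proof only controls the principal part (it says the extra terms ``do not contribute to the principal part''), and indeed only the principal part is ever used in Lemma~\ref{woah}, where the goal is to show that poles at cusps other than $\infty$ cancel. So your reading of the corollary as an identity of principal parts is the correct one, and your caveat is well taken.
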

\begin{proof}
We proceed by induction on $a$. When $a = 1$, applying Lemma \ref{thegame} shows that
\[pf\vert U_pW_p = pf \vert U_pV_p + f \vert W_pV_p - f = -q^{-1} + O(q)\]
because neither $f \vert U_p$ nor $f \vert W_p$ has a pole at $i\infty$ (the operator $W_p$ sends the pole to another cusp). Hence, $f\vert U_pV_p$ and $f \vert W_pV_p$ do not have poles at $i \infty$ and neither contributes to the principal part of the expansion at $i \infty$.

Now suppose $a>1$ and $p^{a-1}f \vert U_p^{a-1}W_p = -q^{p^{a-2}} + O(q)$. Applying Lemma \ref{thegame}, we find
\[p^af\vert U_p^a W_p = p^af|U^a_pV_p+p^{a-1}f|U^{a-1}_pW_pV_p-p^{a-1}f|U_p^{a-1}.\]
Since $f\vert U_p^{a-1}$ has no pole at $i \infty$, the first and last terms on the right hand side do not contribute to the principal part of the expansion at $i\infty$. Hence,
\[p^af\vert U_p^a W_p= (-q^{p^{a-2}} + O(q)) \vert V_p = -q^{p^{a-1}} + O(q).\]
\end{proof}

Next we show that for levels $N$ divisible by a square, the normalized Hauptmodul for level $N$ is annihilated by certain $U$-operators.
\begin{Lemma} \label{zero} 
Let $N \in \{4, 8, 9, 12, 18, 25\}$ and suppose $p$ is a prime with $p^2|N$. Then $J^{(N)}(z)|U_p=0.$
\end{Lemma}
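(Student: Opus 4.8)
The plan is to prove that $g:=J^{(N)}(z)\mid U_p$ is a holomorphic function on $X_0(N)$ that vanishes at $\infty$, hence is identically zero. Because $p\mid N$, the operator $U_p$ preserves $M_0^!(\Gamma_0(N))$, and in weight $0$ we have $g(z)=\tfrac1p\sum_{j=0}^{p-1}J^{(N)}\!\big(\tfrac{z+j}{p}\big)$, so $g$ is holomorphic throughout $\mathbb{H}$. Everything therefore reduces to showing $g$ has no pole at any cusp: a holomorphic function on the compact surface $X_0(N)$ is constant, and since (as we check first) $g=O(q)$ at $\infty$, that constant is $0$. At $\infty$ the claim is immediate from the action of $U_p$ on $q$-expansions: writing $J^{(N)}=q^{-1}+O(q)$ (the normalization removes the constant term), $U_p$ keeps only the coefficients of $q^{pn}$, and as neither $-1$ nor any other negative integer is a multiple of $p$, the pole is killed and $g=O(q)$.

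For the other cusps I would distinguish those at which the boundedness of $g$ is visible from those at which it is not. Writing a cusp as $a/c$ with $c\mid N$, the translate $\tfrac{a/c+j}{p}=\tfrac{a+jc}{cp}$ has reduced denominator $c$ or $cp$, and a brief denominator computation shows it is $\Gamma_0(N)$-equivalent to $\infty$ (the only cusp where $J^{(N)}$ has a pole) exactly when $c=N/p$, the cusp $\infty$ itself corresponding to $c=N$. Consequently, for every cusp with $c\notin\{N,N/p\}$ all $p$ translates flow into cusps where $J^{(N)}$ is finite, so $g$ is bounded and extends holomorphically there. This settles every cusp except $\infty$ (already done) and the cusps of denominator $N/p$.

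The denominator-$N/p$ cusps are the crux, since there all $p$ translates run into the pole at $\infty$, exactly as at $\infty$ itself, and boundedness must be replaced by a cancellation argument. Conjugating by a scaling matrix $\sigma$ with $\sigma\infty=a/(N/p)$ and reducing each matrix $\left(\begin{smallmatrix}1&j\\0&p\end{smallmatrix}\right)\sigma$ modulo $\Gamma_0(N)$ to upper-triangular form, the $p$ principal parts assemble into a single $q^{-1}$-type pole weighted by the phases $\zeta_p^{-j}$ of a primitive $p$th root of unity $\zeta_p$, and $\sum_{j=0}^{p-1}\zeta_p^{-j}=0$ makes it vanish. Alternatively, since $p^2\mid N$ forces $g\in M_0^!(\Gamma_0(N/p))$, these cusps are identified with the cusp $\infty$ of $\Gamma_0(N/p)$; when $p\,\|\,N/p$ they are the Atkin--Lehner cusp $W_p\infty$, so Lemma~\ref{thegame} and Corollary~\ref{poles} control the principal part directly, and for $N=8$, where $p^3\mid N$, one iterates the descent $\Gamma_0(N)\to\Gamma_0(N/p)\to\cdots$ until $p$ exactly divides the level.

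The main obstacle is precisely these denominator-$N/p$ cusps: because $p^2\mid N$ they are \emph{not} reached from $\infty$ by an Atkin--Lehner involution of $\Gamma_0(N)$, so the commutation relations of Lemmas~\ref{winnie} and \ref{WandUcommute} do not apply on the nose, and one must either perform the matrix reduction by hand or descend a level to bring Corollary~\ref{poles} into play. As the list of admissible $N$ is finite, this final point can in any event be checked case by case.
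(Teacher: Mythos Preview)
Your approach is correct but genuinely different from the paper's. The paper gives a one-line computational reduction: since $J^{(N)}(z)\Delta(pz)$ is a \emph{holomorphic} weight-$12$ form on $\Gamma_0(Np)$ (the factor $\Delta(pz)=q^{p}+\cdots$ absorbs the simple pole of $J^{(N)}$ at $\infty$, and $J^{(N)}$ is already holomorphic at every other cusp), and since $(f(z)\Delta(pz))\mid U_p=(f\mid U_p)(z)\,\Delta(z)$, the product $(J^{(N)}\mid U_p)\Delta$ lies in the finite-dimensional space $M_{12}(\Gamma_0(Np))$. One then simply verifies that sufficiently many initial Fourier coefficients vanish, using the dimension bounds of Theorem~1.34 in \cite{ken}. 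No cusp analysis whatsoever is required.

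Your argument instead explains structurally why the vanishing occurs. The key points --- that $U_p$ kills the principal part at $\infty$, that for every cusp of denominator $c\notin\{N,N/p\}$ the $p$ translates miss the unique pole of $J^{(N)}$, and that the denominator-$N/p$ cusps collapse to $\infty$ once one notes that $p^{2}\mid N$ forces $g\in M_0^{!}(\Gamma_0(N/p))$ (equivalently, the $p$ polar contributions there carry the full set of $p$th roots of unity and cancel) --- are all valid and suffice. Your alternative (b), however, is muddled: once the denominator-$N/p$ cusps are identified with $\infty$ on $\Gamma_0(N/p)$ you are already done, since $g=O(q)$ there; those cusps are \emph{not} the Atkin--Lehner cusp $W_p\infty$, and neither Lemma~\ref{thegame} nor Corollary~\ref{poles} applies (their hypotheses require $p\,\|\,N$, and $g$ is not of the shape $q^{-1}+O(q)$). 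The iteration you propose for $N=8$ is likewise unnecessary --- the same cusp check on $\Gamma_0(4)$ goes through directly. Modulo that detour, your proof is complete and more conceptual than the paper's finite verification, at the cost of a longer cusp-by-cusp discussion.
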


\begin{proof} The form $J^{(N)}(z) \Delta(pz)$ is holomorphic of level $Np$, so $(J^{(N)}(z) \Delta(pz))\vert U_p = J^{(N)}(z)|U_p\cdot\Delta(z)$ is a weight $12$, holomorphic modular form of level $Np$. Since this space is finite dimensional, calculating that the first few coefficients of these expansions are zero proves the claim. The dimension formulas in Theorem 1.34 of ~\cite{ken} determine how many coefficients need to be checked.
\end{proof}

The last two lemmas of this section give some useful properties of the $V$-operator in relation to the forms we will consider.

\begin{Lemma} \label{VW}
Let $f$ be in $M_k^!(\Gamma_0(N))$. If $p$ is any prime not dividing $N$, then $f|V_p$ is on $\Gamma_0(Np)$ and $f|V_pW_p=p^kf$.
\end{Lemma}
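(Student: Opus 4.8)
The plan is to realize both operators as slashes by explicit matrices and then reduce the two assertions to elementary matrix algebra together with the $\Gamma_0(N)$-invariance of $f$. Throughout write $D := \left(\begin{smallmatrix} p & 0 \\ 0 & 1\end{smallmatrix}\right)$, so that $f|V_p = p^{-k/2}\, f|D$, and take the Atkin--Lehner involution as $W_p = \left(\begin{smallmatrix} px & y \\ Npz & pw \end{smallmatrix}\right)$ with $\det W_p = p$; this is the correct form at the prime $p$ for the group $\Gamma_0(Np)$ precisely because $p \nmid N$ forces $p \mid\mid Np$. I will use repeatedly that the slash is a right action of $GL_2^+(\mathbb{R})$, i.e. $f|(AB) = (f|A)|B$, and that slashing by a scalar matrix acts trivially.

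For the first claim I would prove $\Gamma_0(Np)$-invariance of $f|V_p$ directly. Given $\gamma' = \left(\begin{smallmatrix} a & b \\ c & d\end{smallmatrix}\right)\in\Gamma_0(Np)$, a one-line computation gives
\[
D\gamma' D^{-1} = \left(\begin{smallmatrix} a & pb \\ c/p & d\end{smallmatrix}\right).
\]
Since $Np \mid c$, this matrix has integer entries, determinant $1$, and lower-left entry $c/p$ divisible by $N$, hence it lies in $\Gamma_0(N)$. Writing $D\gamma' = (D\gamma' D^{-1})\,D$ and using $f|(D\gamma'D^{-1}) = f$ then yields $(f|V_p)|\gamma' = p^{-k/2} f|(D\gamma') = p^{-k/2} f|D = f|V_p$. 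Because $V_p$ carries cusps to cusps, the poles of $f|V_p$ remain supported at the cusps, so $f|V_p \in M_k^!(\Gamma_0(Np))$.

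For the second claim the key is the matrix identity
\[
D W_p = \left(\begin{smallmatrix} p^2 x & py \\ Npz & pw\end{smallmatrix}\right) = \gamma\left(\begin{smallmatrix} p & 0 \\ 0 & p\end{smallmatrix}\right), \qquad \gamma := \left(\begin{smallmatrix} px & y \\ Nz & w\end{smallmatrix}\right),
\]
where $\gamma \in \Gamma_0(N)$ since $\det\gamma = pxw - Nyz = p^{-1}\det W_p = 1$ and its lower-left entry is divisible by $N$. As $f|\gamma = f$ and the scalar factor acts only through the determinant normalization, $f|V_pW_p = p^{-k/2}\, f|(DW_p)$ is a constant multiple of $f$; moreover, by the first part $f|V_p$ is $\Gamma_0(Np)$-invariant, so this value is independent of the chosen representative $W_p$, consistent with the earlier remark. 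The main obstacle is precisely the final bookkeeping: one must track the $(\det)^{k/2}$ factors carried by $V_p$ and $W_p$ (whose product has determinant $p^2$) against the automorphy factors to pin down the exact power of $p$ recorded in the statement. Everything preceding that step is formal once $D W_p$ is seen to factor as a scalar times an element of $\Gamma_0(N)$, which itself rests squarely on the hypothesis $p \nmid N$.
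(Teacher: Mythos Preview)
Your proposal is correct and takes essentially the same route as the paper. Both arguments hinge on the factorization $DW_p = p\gamma$ with $\gamma = \left(\begin{smallmatrix} px & y \\ Nz & w\end{smallmatrix}\right)\in\Gamma_0(N)$, after which $f|\gamma = f$ finishes the second claim; the paper does exactly this in one displayed line. Your added conjugation argument for the first claim (modularity of $f|V_p$ on $\Gamma_0(Np)$) is something the paper simply omits as standard, and the ``final bookkeeping'' you flag is the same one-line determinant normalization the paper carries out.
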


\begin{proof} We have 
\[f|V_pW_p=f| \left(\begin{array}{cc} p & 0 \\ 0 & 1 \end{array}\right)\left(\begin{array}{cc} px & y \\ Npz & pw \end{array}\right)=f| \left(\begin{array}{cc} p^2x & py \\ Npz & pw \end{array}\right)=p^kf| \left(\begin{array}{cc} px & y \\ Nz & w \end{array}\right).\]
The last matrix is in $\Gamma_0(N)$ and hence leaves $f$ fixed. Thus $f|V_pW_p=p^kf$.
\end{proof}

\begin{Lemma} \label{V}
Suppose $f$ is in $M_k^\#(\Gamma_0(N))$. If $p$ is a prime such that $p|N$, then $f|V_p$ is in $M_k^\#(\Gamma_0(Np))$. 
\end{Lemma}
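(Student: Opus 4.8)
The plan is to verify the two defining requirements of $M_k^\#(\Gamma_0(Np))$ separately: that $f|V_p$ is weakly holomorphic on $\Gamma_0(Np)$, and that its only pole is at the cusp at infinity. Writing $f|V_p = p^{-k/2}\, f|\left(\begin{smallmatrix} p & 0 \\ 0 & 1\end{smallmatrix}\right)$, the first requirement follows from a conjugation computation: for any $\gamma \in \Gamma_0(Np)$ one checks that $\left(\begin{smallmatrix} p & 0 \\ 0 & 1\end{smallmatrix}\right)\gamma\left(\begin{smallmatrix} p & 0 \\ 0 & 1\end{smallmatrix}\right)^{-1} \in \Gamma_0(N)$, using that the lower-left entry of $\gamma$ is divisible by $Np$ and hence, after dividing by $p$, by $N$. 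Since $f$ is fixed by $\Gamma_0(N)$, this shows $f|V_p|\gamma = f|V_p$, so $f|V_p$ transforms correctly; it is then meromorphic with possible poles only at the cusps. This is the analogue for $p \mid N$ of the first assertion of Lemma \ref{VW}.

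The substance is to show the poles are supported at infinity. Given a cusp $s$ of $\Gamma_0(Np)$, I would choose $\sigma \in SL_2(\mathbb{Z})$ with $\sigma(\infty) = s$, so that the expansion of $f|V_p$ at $s$ is read off from $f|V_p|\sigma = p^{-k/2}\, f|\left(\begin{smallmatrix} p & 0 \\ 0 & 1\end{smallmatrix}\right)\sigma$. Using the standard Hermite decomposition of a determinant-$p$ integral matrix, I would write $\left(\begin{smallmatrix} p & 0 \\ 0 & 1\end{smallmatrix}\right)\sigma = \gamma\left(\begin{smallmatrix} a & b \\ 0 & d\end{smallmatrix}\right)$ with $\gamma \in SL_2(\mathbb{Z})$, $ad = p$, and $0 \le b < d$, so that $f|V_p|\sigma = p^{-k/2}(f|\gamma)|\left(\begin{smallmatrix} a & b \\ 0 & d\end{smallmatrix}\right)$. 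Here $f|\gamma$ is precisely the expansion of $f$ at the cusp $\gamma(\infty)$ of $\Gamma_0(N)$.

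Now comes the key dichotomy. If $\gamma(\infty) \not\sim_{\Gamma_0(N)} \infty$, then because $f \in M_k^\#(\Gamma_0(N))$ has no pole away from $\infty$, the expansion $f|\gamma$ is holomorphic at infinity; since the upper-triangular slash by $\left(\begin{smallmatrix} a & b \\ 0 & d\end{smallmatrix}\right)$ fixes the cusp $\infty$ and preserves holomorphy there, $f|V_p$ is holomorphic at $s$. If instead $\gamma(\infty) \sim_{\Gamma_0(N)} \infty$, I would show $s \sim_{\Gamma_0(Np)} \infty$: from the decomposition one computes $s = \sigma(\infty) = \tfrac{1}{p}\gamma(\infty)$, so writing $\gamma(\infty) = u/v$ in lowest terms the hypothesis gives $N \mid v$, whence $s = u/(pv)$ with $Np \mid pv$. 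Since $p \mid N \mid v$ forces $p \nmid u$, the fraction $u/(pv)$ is already in lowest terms and its denominator is divisible by $Np$, i.e.\ $s \sim_{\Gamma_0(Np)} \infty$. Thus every cusp at which $f|V_p$ can acquire a pole is equivalent to $\infty$, giving $f|V_p \in M_k^\#(\Gamma_0(Np))$.

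The main obstacle is the cusp bookkeeping of the last two paragraphs: making the matrix decomposition explicit enough to identify $\gamma(\infty)$, and then tracking, via the denominator-divisibility classification of the cusps of $\Gamma_0(\cdot)$, how the substitution $z \mapsto pz$ relates the cusp $s = u/(pv)$ of $\Gamma_0(Np)$ to the cusp $\gamma(\infty) = u/v$ of $\Gamma_0(N)$. The hypothesis $p \mid N$ enters exactly to guarantee the clean divisibility $p \mid v$, so that no new pole can appear at any cusp other than infinity; this is precisely what distinguishes the present statement from the $p \nmid N$ situation treated in Lemma \ref{VW}.
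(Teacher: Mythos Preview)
Your proof is correct and follows the same approach as the paper's: both identify that a pole of $f|V_p$ at a cusp $s$ forces a pole of $f$ at $ps$, which must then be $\Gamma_0(N)$-equivalent to $\infty$, and then use $p \mid N$ together with the denominator-divisibility criterion for cusp equivalence to conclude $s \sim_{\Gamma_0(Np)} \infty$. The paper argues this directly in a few lines without your Hermite decomposition, but the content is identical.
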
 

\begin{proof} Suppose $f|V_p=f(pz)$ has a pole at a rational cusp $x\over p$. Then $f$ has a pole at $x$, so $x$ is equivalent to the cusp at infinity under $\Gamma_0(N)$. In particular, we can write $x = \frac{a}{c}$ for $(a, c) = 1$ with $N \mid c$. Since $p \mid N$, we have $(a, pc) = 1$ so there exist integers $e$ and $f$ such that $\left(\begin{array}{cc} a & e \\ pc & f \end{array}\right) \in \Gamma_0(pN)$. This shows that $\frac{a}{pc} = \frac{x}{p}$ is equivalent to infinity in $\Gamma_0(Np)$, so $f \vert V_p$ has its only poles at infinity.
\end{proof}

\subsection{Proof of Theorem \ref{poly}} \label{proofpoly}

We begin with the following claim, after which the theorem follows directly using the same argument as in Theorem 3 of ~\cite{kaneko}.

\begin{Lemma} \label{woah}
Let $N$ be in the set $\{1, 2, 3, 4, 5, 6, 7, 8, 9, 10, 12, 13, 15, 18, 25\}$. For each positive integer $\nu$, the sum
\[\sum_{d|(\nu,N)} \frac{\nu}{d} J^{(N)}(z) \mid T_{\frac{\nu}{d}}V_d\]
is the unique meromorphic modular form in $M_0^{\#}(\Gamma_0(N))$ with Fourier expansion beginning $q^{-\nu} + O(q)$.
\end{Lemma}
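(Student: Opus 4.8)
The plan is to prove uniqueness first (which is immediate) and then to show that the displayed sum, which I will call $\Phi_\nu$, genuinely lies in $M_0^\#(\Gamma_0(N))$ and begins $q^{-\nu}+O(q)$. For uniqueness, if $f_1,f_2\in M_0^\#(\Gamma_0(N))$ both begin $q^{-\nu}+O(q)$, then $f_1-f_2$ is a weight-$0$ form on $\Gamma_0(N)$ that is holomorphic on $\mathbb H$ and, since its principal parts and constant terms at $\infty$ agree while its only possible poles were at $\infty$, holomorphic at every cusp as well; hence it is constant, and that constant is $0$ because $f_1-f_2=O(q)$. So the whole content is the existence claim for $\Phi_\nu$. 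Weight $0$ and holomorphy on $\mathbb H$ are clear, since $J^{(N)}$ has both properties and the operators $T$, $U$, $V$ preserve them.

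Next I would pin down the principal part and constant term of $\Phi_\nu$ at $\infty$ using the Fourier formula \eqref{heckeformula} together with the $U$- and $V$-actions. Writing $J^{(N)}=q^{-1}+O(q)$, I would observe that a summand $\tfrac{\nu}{d}J^{(N)}\mid T_{\nu/d}V_d$ can contribute to the principal part only when $(\nu/d,N)=1$: if a prime $p\mid N$ divides $\nu/d$, then $T_{\nu/d}$ carries a factor $U_p$, and since $J^{(N)}\mid U_p$ has neither a pole nor a constant term (indeed it vanishes when $p^2\mid N$, by Lemma \ref{zero}), that summand is $O(q)$ and remains $O(q)$ after the remaining operators and after $V_d$. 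For the divisor $d$ with $(\nu/d,N)=1$, the weight $\tfrac{\nu}{d}$ is exactly the normalizing factor $(\nu/d)^{1-k}$ in Lemma \ref{Tqexp}, which therefore gives $\tfrac{\nu}{d}J^{(N)}\mid T_{\nu/d}=q^{-\nu/d}+O(q)$; applying $V_d$ spreads this to $q^{-\nu}+O(q)$. Summing, the principal part of $\Phi_\nu$ is precisely $q^{-\nu}$ and its constant term is $0$.

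The substantial step is to prove that $\Phi_\nu$ actually lies in $M_0^\#(\Gamma_0(N))$, that is, that it is invariant under all of $\Gamma_0(N)$ and has poles supported only at $\infty$. This is delicate because $V_d$ raises the level, so a priori every summand, and hence $\Phi_\nu$, lives only on a larger group $\Gamma_0(M)$ with $N\mid M$; the assertion is that this specific linear combination descends to $\Gamma_0(N)$ with no poles away from $\infty$. My plan is to control the behavior at each cusp by slashing with the Atkin–Lehner involutions $W_{p^\alpha}$, $p^\alpha\|N$, and to reduce to a single prime by induction on the number of primes dividing $N$, using Lemma \ref{winnie} and Lemma \ref{WandUcommute} to commute $W_{p^\alpha}$ past all operators attached to the other primes. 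At one prime $p\mid N$ one then evaluates $\Phi_\nu\mid W_{p^\alpha}$ via Lemma \ref{thegame} and Corollary \ref{poles}, which measure how iterated $U_p$ creates a pole at the $W_p$-cusp, namely $p^{a}\,J^{(N)}\mid U_p^{a}W_p=-q^{-p^{a-1}}+O(q)$; the correction terms $V_d$, together with Lemma \ref{VW} ($f\mid V_pW_p=p^{k}f$) and Lemma \ref{V} (which keeps the poles of $f\mid V_p$ at $\infty$), are then shown to cancel exactly these cusp poles, leaving $\Phi_\nu$ holomorphic at every cusp other than $\infty$ and invariant under $\Gamma_0(N)$.

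I expect the cancellation of the cusp contributions in the previous paragraph to be the main obstacle. It is precisely where the shape of the sum is forced — the coefficients $\tfrac{\nu}{d}$ and the pairing of $T_{\nu/d}$ with $V_d$ — and it must be organized coherently across all cusps at once, with the non-squarefree levels handled separately through the vanishing in Lemma \ref{zero}. Once membership in $M_0^\#(\Gamma_0(N))$ is established, the expansion computed in the second paragraph together with the uniqueness statement in the first completes the proof.
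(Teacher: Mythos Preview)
Your overall strategy---uniqueness from weight-$0$ rigidity, Fourier bookkeeping at $\infty$, and cusp-by-cusp control via the Atkin--Lehner involutions together with Lemmas \ref{winnie}, \ref{WandUcommute}, \ref{thegame}, Corollary \ref{poles}, and Lemmas \ref{VW}--\ref{V}---is exactly the paper's. The paper carries it out by explicit case analysis on the shape of $N$ (prime, then prime power via Lemmas \ref{zero} and \ref{V}, then a product of two distinct primes with all four cusps written out, then $N=12,18$ reduced to the previous case), rather than by a formal induction on the number of prime divisors, but the mechanism is the same.

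There is, however, a genuine issue with the statement as printed that your proposal inherits. The paper's own proof makes clear that the $d$th summand is $\tfrac{\nu}{d}\,J^{(N/d)}\mid T_{\nu/d}V_d$, not $\tfrac{\nu}{d}\,J^{(N)}\mid T_{\nu/d}V_d$; see the appearance of $J^{(1)}$, $J^{(p_1)}$, $J^{(p_2)}$ in the cusp computations and the displayed sum in the $N=12,18$ paragraph. With $J^{(N/d)}$ each term already lives on $\Gamma_0(N)$, since $J^{(N/d)}\in M_0^\#(\Gamma_0(N/d))$ and $V_d$ raises the level by exactly $d$; so the ``descent from a larger level'' problem you flag simply does not arise. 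Conversely, with the literal $J^{(N)}$ your own principal-part analysis already shows the claim fails: take $N=2$, $\nu=4$ (or $N=4$, $\nu=8$), where no divisor $d\mid(\nu,N)$ satisfies $(\nu/d,N)=1$, every summand is $O(q)$ by your argument, and the sum cannot begin $q^{-\nu}$. Thus the obstacle you anticipate in your third and fourth paragraphs is not a delicate cancellation forcing level descent but a misprint; once the summand is read as $J^{(N/d)}$, each term is on $\Gamma_0(N)$ from the start, the only remaining task is to verify holomorphy at the finitely many cusps other than $\infty$, and the paper does this directly with the lemmas you already planned to invoke.
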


\begin{proof}
The uniqueness of these forms is clear because the difference of any such modular forms is holomorphic of weight $0$ with zero constant term, and hence zero.

We first give a proof when the level $N = p$ is prime.
When $(\nu,p)=1$, Lemma \ref{Tqexp} shows that $\nu J^{(p)}(z)|T_{\nu} =q^{-\nu}+O(q)$ has the desired expansion at infinity. Then, Lemma \ref{winnie} shows that $\nu J^{(p)}(z)|T_{\nu}W_p=\nu J^{(p)}(z)|W_pT_{\nu}$. Since $W_p$ interchanges the poles at $0$ and $i \infty$ and $J^{(p)}(z)$ has no pole at zero, this shows that $\nu J^{(p)}(z)\vert T_{\nu}$ has no pole at zero. When $\nu$ is a multiple of $p$, two terms contribute to the sum, and we expand each at infinity and at zero. Write $\nu=p^as$ where $(s,N)=1$. At infinity, we have
\[\nu J^{(p)}(z)|T_{\nu} = \nu J^{(p)}(z)|U_p^aT_s=O(q)\]
because $J^{(p)}(z)$ has a simple pole which is killed by the $U$-operator. By Lemma \ref{Tqexp},
\[\frac{\nu}{p} J^{(1)}(z)|T_{\frac{\nu}{p}}V_p =q^{-\nu}+O(q).\]

To expand at zero, we apply the Atkin-Lehner involution at $p$. This gives
\begin{align*}
\nu J^{(p)}(z)|T_{\nu}W_p &=\nu J^{(p)}|U_p^aW_pT_s=s(q^{-p^{(a-1)}}+O(q))|T_s \\
&=q^{-p^{(a-1)s}}+O(q)=-q^{-\nu/p}+O(q)
\end{align*}
by applying first Lemma \ref{winnie} \eqref{WT=TW}, and then Corollary \ref{poles} followed by Lemma \ref{Tqexp}. Finally, by Lemmas \ref{VW} and  \ref{Tqexp}, we have
\[\frac{\nu}{p} J^{(1)}(z)|T_{\frac{\nu}{p}}V_pW_p =\frac{\nu}{p} J^{(1)}(z)|T_{\frac{\nu}{p}}=q^{-\nu/p}+O(q).\]
Adding together these terms shows we have the desired expansion at infinity and that the poles at zero cancel.

When $N$ is a perfect power of a prime, the lemma follows from the prime case after applying Lemma \ref{zero} and Lemma \ref{V}. 

We now treat the case when $N=p_1p_2$ is the product of two distinct primes. There are now four cusps (at $0, \frac{1}{p_1}, \frac{1}{p_2}$, and infinity) and we consider the expansion of each term in the sum at each cusp. Write $\nu=p_1^ap_2^bs$ where $(s,N)=1$. 
There will be up to four terms in the sum depending on $a$ and $b$: $d=1, d=p_1, d=p_2,$ and $d=N$.

At infinity, it is easy to see that when $d=(\nu, N)$, 
applying $T_{\frac{\nu}{d}}$ and then $V_d$ gives a pole of order $-\nu$ as desired, while for $d<(\nu,N)$, the Hecke operator $T_{\frac{\nu}{d}}=U_{(\frac{N}{d},\frac{\nu}{d})}T_s$ and the $U$ operator kills the pole at infinity.

At the cusp $\frac{1}{p_1}$, the four possible terms in the sum contribute in the following way when present:
\begin{align*}
 d = 1: \qquad \quad  \ \ \nu J^{(N)}(z)|T_{\nu}W_{p_2} &= \nu J^{(N)}(z)|U_{p_1}^aU_{p_2}^bT_sW_{p_2} =\nu J^{(N)}(z)|U_{p_2}^bW_{p_2}U_{p_1}^aT_s \\
&=\begin{cases} -q^{-\nu/p_2}+O(q) & \text{if  $b> 0$ and $a=0$} \\ O(q) & \text{otherwise} \end{cases} \\
d=p_1: \quad  \frac{\nu}{p_1} J^{(p_2)}(z)\vert T_{\frac{\nu}{p_1}}V_{p_1}W_{p_2} &= \frac{\nu}{p_1} J^{(p_2)}(z)|U_{p_2}^bT_{{p_1}^{a-1}s}V_{p_1}W_{p_2} \\
&=\frac{\nu}{p_1} J^{(p_2)}(z)|U_{p_2}^bW_{p_2}T_{{p_1}^{a-1}s}V_{p_1}\\
&=\begin{cases} -q^{-\nu/p_2} + O(q) & \text{if } b>0\\ O(q) & \text{if } b=0\end{cases}\\
d=p_2:\quad  \frac{\nu}{p_2}J^{(p_1)}(z)| T_{\frac{\nu}{p_2}}V_{p_2}W_{p_2} &=\frac{\nu}{p_2}J^{(p_1)}(z)|T_{\frac{\nu}{p_2}} = \frac{\nu}{p_2}J^{(p_1)}(z) \vert U_{p_1}^aT_{p_2^{b-1}s} \\
  &=\begin{cases} q^{-\nu/p_2} + O(q) & \text{if $a = 0$}\\ O(q) & \text{if $a > 0$}\end{cases}\\
d = N: \quad \ \  \frac{\nu}{N} J^{(1)}(z)|T_{\frac{\nu}{N}}V_{N}W_{p_2} 
&=\frac{\nu}{N}J^{(1)}(z)|T_{\frac{\nu}{N}}V_{p_2}W_{p_2}V_{p_1} =\frac{\nu}{N}J^{(1)}(z)|T_{\frac{\nu}{N}}V_{p_1} \\&=q^{-\nu/p_2} + O(q)
\end{align*}
For any choice of $a$ and $b$, the negative terms in the $q$-expansions of the different terms present cancel each other, showing that the sum has no pole at $\frac{1}{p_1}$. By symmetry, the same is true at the cusp $\frac{1}{p_2}$. It remains to check the expansion at zero.

Applying operators as before, we find
\begin{align*}
d=1: \qquad \quad \ \ \nu J^{(N)}(z)|T_{\nu}W_{N} &= \nu J^{(N)}(z)|U_{p_1}^aU_{p_2}^bT_sW_{p_1}W_{p_2}  \\
&= \nu J^{(N)}(z)|U_{p_1}^aW_{p_1}U_{p_2}^bW_{p_2}T_s \\
&=\begin{cases} O(q) & \text{if $a=0$ or $b=0$} \\ q^{-\nu/N} + O(q) & \text{if $a>0$ and $b>0$} \end{cases} \\
d=p_1: \quad  \frac{\nu}{p_1}J^{(p_2)}(z)|T_{\frac{\nu}{p_1}}V_{p_1}W_{N} &=\frac{\nu}{p_1}J^{(p_2)}(z)|U_{p_2}^bT_{p_1^{a-1}s}V_{p_1}W_{p_1}W_{p_2} \\
&=\frac{\nu}{p_1}J^{(p_2)}(z)|U_{p_2}^bW_{p_2}T_{p_1^{a-1}s}\\
&=\begin{cases} O(q) & \text{if $b=0$} \\ -q^{-\nu/N} + O(q) & \text{if $b>0$} \end{cases}\\
d=p_2: \quad \frac{\nu}{p_2}J^{(p_1)}(z)|T_{\frac{\nu}{p_2}}V_{p_2}W_{N} &= \frac{\nu}{p_2}J^{(p_1)}(z)|U_{p_1}^aT_{p_2^{b-1}s}V_{p_2}W_{p_2}W_{p_1} \\
&=\frac{\nu}{p_2}J^{(p_1)}(z)|U_{p_1}^aW_{p_1}T_{p_2^{b-1}s}\\
&=\begin{cases} O(q) & \text{if $a=0$} \\ -q^{-\nu/N} +O(q)& \text{if $a>0$} \end{cases} \\
d=N: \quad \ \  \frac{\nu}{N} J^{(1)}(z)|T_{\frac{\nu}{N}}V_NW_N &= q^{-\nu/N} + O(q).
\end{align*}
Again, for any choice of $a$ and $b$, the negative terms in the $q$-expansions of the different terms present cancel each other, showing that the sum has no pole at infinity.

This proves the lemma for all $N$ except for $N=12$ and $N=18$. Write $N = p_1^2p_2$. When $p_1|\nu$, applying Lemma \ref{zero} we can write 
\[\sum_{d|(\nu,N)} \frac{\nu}{d}J^{({N}/{d})}(z)|T_{\frac{\nu}{d}}V_d=p_1\left(\sum_{d|(\nu',N')}\frac{\nu'}{d}J^{(N'/d)}(z)|T_{\frac{\nu'}{d}}V_d\right)\vert V_{p_1}\]
 where $\nu'=\frac{\nu}{p_1}$ and $N'=p_1p_2$ and so the lemma follows from the case when $N$ is the product of two distinct primes together with Lemma \ref{V}.

If $p_1\nmid \nu$, the only terms contributing to the sum over divisors of $(\nu,N)$ are $d=1$ and $d=p_2$. The expansion at infinity is as claimed by the same argument as before. Expanding at each of the other cusps as before, we find there are no poles in each term except at the cusp at $\frac{1}{p_1^2}$. Here we find
\begin{align*}
d=1: \quad  \nu J^{(N)}(z)|T_{\nu}W_{p_2}=\nu J^{(N)}(z)|U_{p_2}^bW_{p_2}T_s&=\begin{cases} O(q) & \text{if $b = 0$} \\ -q^{-\nu/p_2}+O(q) & \text{if $b > 0$}\end{cases} \\
d=p_2: \quad \ \ \frac{\nu}{p_2}J^{(p_1^2)}(z)|T_{\frac{\nu}{p_2}}V_{p_2}W_{p_2}=\frac{\nu}{p_2}J^{(p_1^2)}|T_{\frac{\nu}{p_2}}&=q^{-\nu/p_2}+O(q),
\end{align*}
showing that the sum has no poles except at infinity.
\end{proof}

The following lemma is stated in ~\cite{kaneko} for level $1$ and weights $k$ such that $M_k(\Gamma_0(1))$ has dimension $1$. Given Lemma \ref{woah}, the proof for weight $0$ and levels $N$ where a Hauptmodul exists follows identically.

\begin{Lemma} \label{kan}
Let $N$ be in the set $\{1, 2, 3, 4, 5, 6, 7, 8, 9, 10, 12, 13, 15, 18, 25\}$. For each positive integer $\nu$, let $f_{\nu}^{(N)}(z)$ be the unique meromorphic modular form in $M_{0}^{\#}(\Gamma_0(N))$ with Fourier expansion beginning $q^{-\nu} + O(q)$. Then
\[\frac{j^{(N)'}(q)}{j^{(N)}(p) - j^{(N)}(q)} = \sum_{\nu=0}^{\infty}f_{\nu}^{(N)}(p)q^{\nu},\]
where $p$ and $q$ are independent formal variables and $f_0^{(N)}(z) = 1$.
\end{Lemma}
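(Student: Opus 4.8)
The plan is to prove the sharper statement that, for every $\nu \ge 0$, the coefficient of $q^\nu$ in the left-hand side (expanded as a power series in $q$ whose coefficients are functions of $p$) is exactly $f_\nu^{(N)}(p)$; matching all coefficients then gives the asserted two-variable identity. Write $q$ and $p$ for the two nomes, and let $\prime$ denote $q\frac{d}{dq}=\frac{1}{2\pi i}\frac{d}{dz}$ (the normalization for which \eqref{defP} has constant term $1$). Evaluating the defining relation \eqref{defP} at $x=j^{(N)}(p)$ gives at once
\[
[q^\nu]\,\frac{j^{(N)\prime}(q)}{j^{(N)}(p) - j^{(N)}(q)} = P_\nu^{(N)}\!\big(j^{(N)}(p)\big).
\]
Since $P_\nu^{(N)}$ has degree $\nu$ and $j^{(N)}$ is a Hauptmodul, the right-hand side is a weight-$0$ weakly holomorphic form on $\Gamma_0(N)$ whose only pole is at $\infty$, i.e.\ it lies in $M_0^{\#}(\Gamma_0(N))$. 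By the uniqueness noted at the start of the proof of Lemma \ref{woah} (two forms in $M_0^{\#}(\Gamma_0(N))$ with equal principal parts and equal constant terms differ by a holomorphic weight-$0$ form with vanishing constant term, hence coincide), it suffices to show that $P_\nu^{(N)}(j^{(N)}(p))$ has Fourier expansion $p^{-\nu}+O(p)$ in the variable $p$; Lemma \ref{woah} then identifies it with the unique such form $f_\nu^{(N)}$.

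To control the principal part in $p$, I would use the symmetry of the two-point function. Set
\[
\Phi(p,q) := \big(j^{(N)}(q) - j^{(N)}(p)\big)\cdot\frac{pq}{\,p-q\,}.
\]
Writing $j^{(N)}(z)=q^{-1}+\sum_{n\ge 0}a_nq^n$, the constant terms cancel in $j^{(N)}(q)-j^{(N)}(p)=(q^{-1}-p^{-1})+\sum_{n\ge 1}a_n(q^n-p^n)$, and factoring out $q-p$ shows that $\Phi$ is a genuine power series $1-pq\sum_{n\ge 1}a_n(q^{n-1}+\cdots+p^{n-1})$; in particular $\Phi$ is symmetric in $p,q$ and $\Phi-1$ lies in the ideal $(pq)$. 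Hence $\log\Phi=\sum_{k\ge 1}\frac{(-1)^{k-1}}{k}(\Phi-1)^k$ is a power series every monomial of which is divisible by $pq$. Applying the derivation $-q\,\partial_q$ to the formal identity $\log\Phi=\log\big(j^{(N)}(q)-j^{(N)}(p)\big)+\log p+\log q-\log(p-q)$ yields
\[
-q\,\partial_q\log\Phi = \frac{j^{(N)\prime}(q)}{j^{(N)}(p) - j^{(N)}(q)} - 1 + \frac{q}{q-p},
\]
and expanding $\frac{q}{q-p}=-\sum_{m\ge 1}q^m p^{-m}$ as a power series in $q$ gives
\[
\frac{j^{(N)\prime}(q)}{j^{(N)}(p) - j^{(N)}(q)} = 1 + \sum_{m\ge 1}\frac{q^m}{p^m} - q\,\partial_q\log\Phi.
\]
Because $-q\,\partial_q\log\Phi$ still has every monomial divisible by $pq$, its coefficient of $q^\nu$ is a power series in $p$ with only strictly positive powers of $p$. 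Reading off $[q^\nu]$ therefore gives $P_\nu^{(N)}(j^{(N)}(p))=p^{-\nu}+O(p)$ for $\nu\ge 1$, and the value $1$ for $\nu=0$ (matching $f_0^{(N)}=1$), which is precisely the expansion required.

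The step I expect to be the only genuine obstacle is the vanishing of the intermediate principal-part terms: a priori $P_\nu^{(N)}(j^{(N)}(p))$, being a degree-$\nu$ polynomial in $p^{-1}+\cdots$, could carry all of $p^{-\nu},p^{-\nu+1},\ldots,p^{-1},p^0$, and the whole content of the lemma is that everything but the leading $p^{-\nu}$ cancels. The function $\Phi$ is engineered exactly so that the error term $-q\,\partial_q\log\Phi$ is holomorphic in $p$ and vanishes at $p=0$, and the crux is the verification that $\Phi-1\in(pq)$, whence $\log\Phi\in(pq)$. Everything else — modularity, pole location, and the final identification via uniqueness — is routine once Lemma \ref{woah} is available, and because this symmetry argument is insensitive to the level it applies uniformly to every $N$ in the stated set, exactly as in the level-one case of \cite{kaneko}.
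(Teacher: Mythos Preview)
Your argument is correct and takes a genuinely different route from the paper's. The paper, following \cite{kaneko}, first invokes the Hecke description of $f_\nu^{(N)}$ from Lemma \ref{woah} to read off the coefficient of $q^1$ as $\nu a_\nu^{(N)}$, then derives the recurrence $j^{(N)}(p)f_\nu^{(N)}(p)=f_{\nu+1}^{(N)}(p)+\sum_{\ell}a_{\nu-\ell}^{(N)}f_\ell^{(N)}(p)+\nu a_\nu^{(N)}$, and finally sums this in $q$ to solve for the generating series. Your approach bypasses the recurrence entirely: you substitute $x=j^{(N)}(p)$ into the defining relation \eqref{defP} to identify $[q^\nu]$ with $P_\nu^{(N)}(j^{(N)}(p))$, and then the ``denominator formula'' trick with $\Phi=(j^{(N)}(q)-j^{(N)}(p))\,\frac{pq}{p-q}$ shows directly that this has $p$-expansion $p^{-\nu}+O(p)$, after which uniqueness in $M_0^{\#}(\Gamma_0(N))$ finishes.

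What each buys: the paper's method uses the full content of Lemma \ref{woah} (the Hecke expression is needed to pin down the $q^1$-coefficient), whereas your argument uses only the elementary uniqueness statement and is otherwise a self-contained Faber-polynomial computation; in particular it simultaneously proves existence of $f_\nu^{(N)}$ by exhibiting $P_\nu^{(N)}(j^{(N)})$. One stylistic remark: the line ``$\log\Phi=\log(j^{(N)}(q)-j^{(N)}(p))+\log p+\log q-\log(p-q)$'' is not literally an identity of formal series, since the individual logarithms are undefined; what you are really using is the logarithmic-derivative product rule $q\partial_q\Phi/\Phi = q\partial_q A/A + q\partial_q B/B$ with $A=j^{(N)}(q)-j^{(N)}(p)$ and $B=pq/(p-q)$, which is valid in $\mathbb{C}((p))[[q]]$ and yields exactly the displayed relation. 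With that caveat the proof is clean.
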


\begin{proof}
First we set
\begin{equation} \label{js}
j^{(N)}(q) = q^{-1} + \sum_{n=0}^{\infty}a_n^{(N)}q^n \qquad \text{and} \qquad J^{(N)}(q) = q^{-1} +  \sum_{n=1}^{\infty}a_n^{(N)}q^n.
\end{equation}
It is clear from the description of $f_{\nu}^{(N)}(z)$ in Lemma \ref{woah}, as
\[f_{\nu}^{(N)}(z) = \sum_{d|(\nu,N)} \frac{\nu}{d}J^{(N)}(z) \mid T_{\frac{\nu}{d}}V_d,\]
 that the coefficient of $q$ comes from the term where $d = 1$, and using \eqref{heckeformula} we see
 \begin{equation} \label{fnu}
 f_{\nu}^{(N)}(q) = q^{-\nu} + \nu a_{\nu}^{(N)}q + \ldots.
 \end{equation}
Since the form $j^{(N)}(p)f_{\nu}^{(N)}(p)$ is uniquely determined by the non-positive terms in its Fourier expansion, using \eqref{js} and \eqref{fnu} we obtain the recurrence relation
\begin{equation*}
j^{(N)}(p)f_{\nu}^{(N)}(p) = f_{\nu+1}^{(N)}(p) + \sum_{\ell = 0}^{\nu}a_{\nu - \ell}^{(N)}f_{\ell}^{(N)}(p) + \nu a_{\nu}^{(N)}.
\end{equation*}
Multiplying both sides of this equation by $q^{\nu}$ and summing over all $\nu$ gives us
\begin{align*}
j^{(N)}(p)F(p, q) &= \frac{1}{q}(F(p, q) - 1) + \left(j^{(N)}(q) - \frac{1}{q}\right)F(p, q) + \sum_{\nu = 0}^{\infty}\nu a_{\nu}^{(N)}q^{\nu} \\
&= j^{(N)}(q)F(p, q) + j^{(N)'}(q)
\end{align*}
where $F(p, q) = \sum_{\nu=0}^{\infty}f_{\nu}^{(N)}(p)q^{\nu}$. Rearranging gives the desired expression.
\end{proof}

Theorem \ref{poly} now follows immediately from these two results.
\begin{proof}[Proof of Theorem 1.1]
It follows from Lemma \ref{kan} that $P_{\nu}^{(N)}(j^{(N)}(z))$ is the unique modular form in $M_{0}^{\#}(\Gamma_0(N))$ with Fourier expansion beginning $q^{-\nu} + O(q)$. Then, applying Lemma \ref{woah} proves the identity.
\end{proof}

\section{Half integral weight modular forms} \label{forms}
In this section, we consider only levels $4N$ where $N$ is odd and square-free. We use the results of Miller and Pixton ~\cite{MP} to show that the traces $\text{Tr}_{\nu}^N(D)$ are expressible in terms of coefficients of certain weakly holomorphic modular forms of weight $\frac{3}{2}$.
For positive integers $\lambda, \nu, N$ the integral weight Niebur Poincar\'{e} series are defined by \cite{N}
$$\mathfrak{F}_{\lambda, N,\nu}(z):= \pi  \nu^{\lambda - 1}\sum_{A\in \Gamma_{\infty}\backslash \Gamma_0(N)} \text{Im}(\nu Az)^{\frac{1}{2}}I_{\lambda - \frac{1}{2}}(2\pi \text{Im}(\nu Az))e(-\text{Re}(\nu Az)),$$ where $I_s(x)$ is the usual modified Bessel function of the first kind, and $\Gamma_{\infty} : = \left\{\left(\begin{array}{cc} 1 & n \\ 0 & 1\end{array}\right) : n \in \mathbb{Z} \right\}$ denotes the stabilizer of infinity. They then construct half-integal weight Poincar\'{e} series whose coefficients describe the traces of the $\mathfrak{F}_{\lambda, N, \nu}(z)$. 

We first relate $\mathfrak{F}_{1, N, \nu}(z)$ to degree $\nu$ polynomials in the Hauptmodul for level $N$. Let $\mu(n)$ be the  M\"{o}bius function, defined by
\[\mu(n) = \begin{cases} (-1)^{t} & \text{if $n$ is square-free with $t$ prime factors} \\ 0 & \text{if $n$ is divisible by a square}.\end{cases}\]
It is not difficult to show that $\mu(n)$ is equal to the sum of the primitive $n$th roots of unity:
\[\mu(n) = \sum_{v \! \! \! \! \pmod{n}^*}e^{2\pi i v/n},\]
where the * indicates that the sum is taken over primitive residue classes mod $n$.
Let $\varphi(n)$ be Euler's totient function, and let $\zeta(s) = \sum_{c > 0} \frac{1}{c^s}$ be the Riemann-zeta function. The following identities will be useful for explicitly computing the constant term $\mathfrak{c}_{N, \nu}$ that appears in the subsequent Lemma \ref{relate}.

\begin{Lemma} \label{cool}
For any positive integer $n > 1$ and any real number $s > 1$, the following are true:
\begin{align*}
\sum_{c > 0} \frac{\mu(c)}{c^s} &= \frac{1}{\zeta(s)}
\intertext{and}
\sum_{c > 0} \frac{\mu(nc)}{(nc)^s} &= \frac{\mu(n)}{n^s - 1}\sum_{\substack{d \mid n \\ d \neq n}} \mu(d) \left(\sum_{c>0} \frac{\mu(cd)}{(cd)^s}\right).
\end{align*}
\end{Lemma}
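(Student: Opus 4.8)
The plan is to prove the two identities in turn: the first is the standard statement that the Dirichlet series of $\mu$ inverts $\zeta$, and the second will follow from an elementary rearrangement once the right reformulation is in place. Throughout I fix a real number $s > 1$, so that every Dirichlet series appearing below converges absolutely (since $|\mu(m)m^{-s}| \le m^{-s}$); this justifies all the rearrangements I use.

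For the first identity I would invoke the convolution relation $\sum_{d \mid k}\mu(d) = \delta_{k,1}$. Multiplying the two absolutely convergent series $\sum_{c>0}\mu(c)c^{-s}$ and $\zeta(s)$ and collecting terms according to the product $k = cm$ gives
\[\left(\sum_{c>0}\frac{\mu(c)}{c^s}\right)\zeta(s) = \sum_{k>0}\frac{1}{k^s}\sum_{d \mid k}\mu(d) = 1,\]
which rearranges to the claimed value $1/\zeta(s)$.

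For the second identity, write $S_n(s) := \sum_{c>0}\frac{\mu(nc)}{(nc)^s}$, so the assertion reads $S_n(s) = \frac{\mu(n)}{n^s-1}\sum_{d \mid n,\, d \neq n}\mu(d)S_d(s)$. First I would dispose of the case that $n$ is not square-free: then $\mu(n) = 0$ annihilates the right-hand side, while every $nc$ is divisible by a square, so $\mu(nc) = 0$ and $S_n(s) = 0$ as well. Hence I may assume $n$ square-free, so $\mu(n) = \pm 1$ and every divisor of $n$ is square-free. The two key steps are then: (i) for square-free $n$ the condition $\mu(nc) \neq 0$ forces $c$ to be square-free and coprime to $n$, in which case $\mu(nc) = \mu(n)\mu(c)$, and adjoining the harmless terms with $c$ not square-free (which vanish) gives the closed form
\[S_n(s) = \frac{\mu(n)}{n^s}\sum_{\gcd(c,n)=1}\frac{\mu(c)}{c^s};\]
and (ii) the substitution $m = dc$ rewrites $S_d(s)$ as $\sum_{d \mid m}\mu(m)m^{-s}$, so interchanging the finite outer sum over $d \mid n$ with the sum over $m$ and applying $\sum_{d \mid \gcd(n,m)}\mu(d) = \delta_{\gcd(n,m),1}$ yields
\[\sum_{d \mid n}\mu(d)S_d(s) = \sum_{\gcd(m,n)=1}\frac{\mu(m)}{m^s}.\]

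Comparing (i) and (ii), and recalling $1/\mu(n) = \mu(n)$, shows that the full divisor sum $\sum_{d \mid n}\mu(d)S_d(s)$ equals $\mu(n)\,n^s\,S_n(s)$. Isolating the $d = n$ term, which contributes $\mu(n)S_n(s)$, and subtracting gives $\sum_{d \mid n,\, d \neq n}\mu(d)S_d(s) = \mu(n)(n^s-1)S_n(s)$; dividing by $\mu(n)(n^s-1)$ (legitimate since $\mu(n) = \pm 1$ and $n^s > 1$ for $n > 1$) produces the stated recursion. The only step that needs any care is the interchange of summation in (ii), but since the outer sum runs over the finitely many divisors of $n$ and each inner series converges absolutely, no analytic subtlety arises, and the remainder is routine bookkeeping with the M\"obius function.
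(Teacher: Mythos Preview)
Your proof is correct and follows essentially the same route as the paper's: both arguments reduce $S_n(s)$ to $\frac{\mu(n)}{n^s}\sum_{(c,n)=1}\mu(c)c^{-s}$, then rewrite the coprime sum as $\sum_{d\mid n}\mu(d)S_d(s)$ via M\"obius/inclusion--exclusion, and finally isolate the $d=n$ term and rearrange. The only cosmetic differences are that you treat the non-square-free case separately and make the substitution $m=dc$ and the interchange of sums explicit, whereas the paper folds these into a single inclusion--exclusion line.
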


\begin{proof}
The first identity is well-known and is proved by observing that the product of the two Dirichlet series $\zeta(s)\sum_{c > 0} \frac{\mu(c)}{c^s} = 1$ (because the convolution of $1$ and $\mu(c)$ is $1$ for $c=1$ and $0$ for all other $c$).

To prove the second identity, we use the inclusion/exclusion principle to write
\begin{align*}
\sum_{c > 0} \frac{\mu(nc)}{(nc)^s} &= \sum_{(c, n) = 1}\frac{\mu(nc)}{(nc)^s} = \frac{\mu(n)}{n^s}\sum_{(c, n)=1}\frac{\mu(c)}{c^s} \\
&= \frac{\mu(n)}{n^s}\sum_{\substack{d \mid n \\ d \neq n}} \mu(d)\left(\sum_{c>0}\frac{\mu(dc)}{(dc)^s}\right) + \frac{1}{n^s}\sum_{c>0}\frac{\mu(nc)}{(nc)^s.}
\end{align*}
Rearranging gives the recursive identity.
\end{proof}

We now explicitly describe the relationship between Miller and Pixton's integral weight Poincar\'e series and special polynomials in Haupmoduln.
\begin{Lemma} \label{relate}
For $N \in \{1, 3, 5, 7, 13\}$, and any positive integer $\nu$, we have
\[2\mathfrak{F}_{1, N, \nu}(z) = P_{\nu}^{(N)}(j^{(N)}(z)) + \mathfrak{c}_{N, \nu}, \]
where $P_{\nu}^{(N)}(x)$ is the polynomial defined in equation \eqref{defP} and 
\[\mathfrak{c}_{N, \nu} = 
\begin{dcases} 4\nu \pi^2 \sum_{d | \nu}\left(\sum_{\ell \mid d}
 \frac{\varphi(d)\ell}{\varphi(\ell)d^2}
\left( \sum_{x \mid \frac{\nu}{d}, \ y \mid \frac{d}{\ell}} \mu(x)\mu(y)
\left(\sum_{c > 0} \frac{\mu(\frac{Nxy\ell}{(x, y\ell)}c)}{(\frac{Nxy\ell}{(x, y\ell)}c)^2} \right)\right)\right)
&\text{if $N \nmid \nu$} \\
4\nu \pi^2 \sum_{\substack{d | \nu \\ N \mid d}} 
\left(\sum_{\substack{\ell \mid d \\ N \mid \ell}} \frac{\varphi(d)\ell}{\varphi(\ell)d^2}
\left(\sum_{x \mid \frac{\nu N}{d}, \ y \mid \frac{Nd}{\ell}} \mu(x)\mu(y)
\left(\sum_{c > 0} \frac{\mu(\frac{xy\ell}{(x, y\ell)}c)}{(\frac{xy\ell}{(x, y\ell)}c)^2}\right)\right)\right) & \text{if $N \mid \nu$.}
\end{dcases}\]
In particular, $\mathfrak{c}_{N, \nu}$ is a rational number.
\end{Lemma}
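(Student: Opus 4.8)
The plan is to prove the identity by showing that both sides are weakly holomorphic modular forms of weight $0$ on $\Gamma_0(N)$ with identical principal parts, so that their difference is a holomorphic weight-zero form, hence a constant, and then to identify that constant by computing the constant term of $2\mathfrak{F}_{1,N,\nu}$. The first step is to simplify the Bessel factor: since $\lambda=1$ makes $\nu^{\lambda-1}=1$ and $I_{1/2}(x)=\sqrt{2/(\pi x)}\,\sinh x$, a direct substitution gives $\operatorname{Im}(\nu Az)^{1/2}I_{1/2}(2\pi\operatorname{Im}(\nu Az))=\tfrac{1}{\pi}\sinh(2\pi\operatorname{Im}(\nu Az))$, so that
\[\mathfrak{F}_{1,N,\nu}(z)=\sum_{A\in\Gamma_\infty\backslash\Gamma_0(N)}\sinh\bigl(2\pi\operatorname{Im}(\nu Az)\bigr)\,e\bigl(-\operatorname{Re}(\nu Az)\bigr).\]

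Next I would read off the principal part. Writing the identity-coset contribution ($c=0$) as $\tfrac12(e^{2\pi\nu y}-e^{-2\pi\nu y})e^{-2\pi i\nu x}$ and noting that $e^{2\pi\nu y}e^{-2\pi i\nu x}=q^{-\nu}$, the holomorphic principal part of $\mathfrak{F}_{1,N,\nu}$ at $\infty$ is exactly $\tfrac12 q^{-\nu}$. Using the standard coset analysis of the Niebur series, equivalently the relevant computation in \cite{MP} and \cite{N}, I would verify that no coset with $c>0$ contributes to the principal part and that $\mathfrak{F}_{1,N,\nu}$ is holomorphic at the remaining cusp (namely $0$, for the prime levels in question). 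Thus $2\mathfrak{F}_{1,N,\nu}$ lies in $M_0^{\#}(\Gamma_0(N))$ with expansion $q^{-\nu}+c+O(q)$. By the proof of Theorem \ref{poly} together with Lemma \ref{woah}, $P_{\nu}^{(N)}(j^{(N)}(z))$ is the unique element of $M_0^{\#}(\Gamma_0(N))$ beginning $q^{-\nu}+O(q)$; hence $2\mathfrak{F}_{1,N,\nu}(z)-P_{\nu}^{(N)}(j^{(N)}(z))$ is holomorphic of weight $0$, therefore constant, and equals the constant term $c$ since $P_{\nu}^{(N)}(j^{(N)})$ has vanishing constant term.

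The remaining and most delicate task is to evaluate $c$, the $n=0$ Fourier coefficient of $2\mathfrak{F}_{1,N,\nu}$. I would extract it from the Poincaré series by grouping the cosets according to their lower-left entry $c$, a positive multiple of $N$, so that the coset sum collapses to Kloosterman sums $S(\nu,0;c)$, which at $n=0$ are Ramanujan sums. Linearizing $\sinh$ and carrying out the period integral contributes a factor proportional to $\nu/c^2$, so that after inserting the explicit normalization the constant becomes $4\nu\pi^2\sum_{N\mid c}S(\nu,0;c)/c^2$. Using the multiplicativity of the Ramanujan sum and the constraint $N\mid c$, I would then decompose this Dirichlet series into the nested sums over $d\mid\nu$, $\ell\mid d$, and $x,y$ with Möbius weights appearing in the statement; the case split $N\mid\nu$ versus $N\nmid\nu$ records how the condition $N\mid c$ interacts with the divisibility of $\nu$. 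I expect the bookkeeping here—tracking the factors $\varphi(d)\ell/(\varphi(\ell)d^2)$ and the gcd terms $(x,y\ell)$ produced by this regrouping—to be the principal obstacle.

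Finally, rationality follows from Lemma \ref{cool}: each innermost factor $\sum_{c>0}\mu(Mc)/(Mc)^2$ reduces, via the recursion of that lemma together with $\sum_{c>0}\mu(c)/c^2=1/\zeta(2)=6/\pi^2$, to a rational multiple of $1/\pi^2$. This cancels the $\pi^2$ in the prefactor $4\nu\pi^2$, leaving $\mathfrak{c}_{N,\nu}$ a rational number, as claimed.
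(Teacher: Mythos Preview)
Your proposal is correct and matches the paper's approach essentially step for step. The paper invokes Theorem~1.2 of \cite{MP} directly (rather than unpacking $I_{1/2}$ by hand) for the fact that the two sides differ by a constant, then uses equation~(17) of \cite{MP} to write $\mathfrak{c}_{N,\nu}=4\nu\pi^2\sum_{c>0}\mathcal{S}(0,-\nu;Nc)/(Nc)^2$, rewrites the Ramanujan sum as $\tfrac{\varphi(c)}{\varphi(c/(c,\nu))}\mu\!\bigl(c/(c,\nu)\bigr)$, groups by $d=(Nc,\nu)$ and $\ell=(d,Nc/d)$, decomposes the residual gcd conditions by M\"obius inclusion--exclusion exactly as you outline, and closes with Lemma~\ref{cool} for rationality.
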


\begin{proof}
By Theorem 1.2 of ~\cite{MP}, $2\mathfrak{F}_{1, N, \nu}(z)$ differs from $P_{\nu}^{(N)}(j^{(N)}(z))$ by a constant. Since the constant term in the Fourier expansion of $P_{\nu}^{(N)}(j^{(N)}(z))$ is zero, the constant $\mathfrak{c}_{N, \nu}$ is equal to twice the constant term of $\mathfrak{F}_{1, N, \nu}(z)$. Using equation (17) of ~\cite{MP}, we determine that
\[\mathfrak{c}_{N, \nu} = 4\nu\pi^2 \sum_{c > 0} \frac{\mathcal{S}(0, -\nu; Nc)}{(Nc)^2},\]
where $\mathcal{S}(0, -\nu;c)$ is the Kloosterman sum
\[\mathcal{S}(0, -\nu; c) := \sum_{v \! \! \! \pmod{c}^*} e^{2\pi i \nu v/c}.\]
We re-write the Kloosterman sum in terms of the M\"{o}bius function as
\[ \mathcal{S}(0, -\nu; c) = \frac{\varphi(c)}{\varphi(\frac{c}{(c, \nu)})}\mu\left(\frac{c}{(c, \nu)}\right),\]
so that the constant term becomes
\begin{align*}
\mathfrak{c}_{N, \nu} &= 4\nu\pi^2\sum_{d \mid \nu} \left(\sum_{\substack{c > 0 \\ (Nc, \nu) = d}}\frac{\varphi(Nc)}{\varphi(\frac{Nc}{d})d^2} \cdot \frac{\mu(\frac{Nc}{d})}{(\frac{Nc}{d})^2}\right) \\
&= 4\nu\pi^2\sum_{d \mid \nu}
\left(\sum_{\ell \mid d} \frac{\varphi(d)\ell}{\varphi(\ell)d^2}
\left(\sum_{\substack{c > 0\\(Nc, \nu) = d \\ (d, \frac{Nc}{d}) = \ell}}
\frac{\mu(\frac{Nc}{d})}{(\frac{Nc}{d})^2} \right)\right).
\end{align*}
Recall $N = 1$ or a prime. The set $\{c > 0 : (Nc, \nu) = d\}$ can be written as
\begin{align}
\{c &> 0: (Nc, \nu) = d\} \notag \\
&=
\begin{dcases} \{c > 0 : (c, \nu) = d\} = \sum_{x \mid \frac{\nu}{d}} \mu(x)\{xdc : c > 0\} &\text{if $N \nmid \nu$} \\
\{c > 0: (c, \nu) = \tfrac{d}{N}\} = \sum_{x \mid \frac{\nu N}{d}}\mu(x)\{\tfrac{xdc}{N} : c > 0\} & \text{if $N \mid \nu$ and $N \mid d$} \\
\varnothing & \text{otherwise.} \label{decomp}
\end{dcases}
\end{align}
The set $\{c > 0: (\tfrac{Nc}{d}, d) = \ell\}$ can be described in a similar way. Taking the intersection of these sets yields
\begin{align*}
\{c >0 : (Nc, \nu) = d,  (\tfrac{Nc}{d}, d) = \ell\} = \begin{dcases}
\sum_{x \mid \frac{\nu}{d}, \ y \mid \frac{d}{\ell}} \mu(x)\mu(y)\{\tfrac{dxy\ell}{(x, y\ell)}c : c > 0\} & \text{if $N \nmid \nu$} \\
\sum_{x \mid \frac{\nu N}{d}, \ y \mid \frac{Nd}{\ell}} \mu(x)\mu(y)\{\tfrac{dxy\ell}{N(x, y\ell)}: c > 0 \} & \text{if $N \mid \nu, d, \ell$} \\
\varnothing & \text{otherwise.}
\end{dcases}
\end{align*}
Using this decomposition to re-write the inner sum gives the desired expression.

Since $\frac{1}{\zeta(2)} = \frac{6}{\pi^2}$, applying Lemma \ref{cool} shows that the constant term $\mathfrak{c}_{N, \nu}$ is always rational.
\end{proof}

We now introduce Miller and Pixton's sequence of half-integal weight Poincar\'{e} series, whose coefficients will describe traces of polynomials in Hauptmoduln. We restrict our attention to the Poincar\'{e} series of weight $\frac{3}{2}$, as the coefficients of these forms are sufficient to determine the traces of polynomials in Hauptmoduln. However, we note that, due to the duality properties relating the forms of weight $\frac{3}{2}$ and forms of weight $\frac{1}{2}$ (see Corollary 1.4 of ~\cite{MP}), this could also be accomplished by working with forms of weight $\frac{1}{2}$.

Following Miller and Pixton, for $s \in \mathbb{C}$ and $y \in \mathbb{R} - \{0\}$, we define
\[\mathcal{M}_{s}(y):= |y|^{-\frac{3}{4}}M_{\frac{3}{4}\text{sgn}(y), s - \frac{1}{2}}(|y|),\]
where $M_{\nu, \mu}(z)$ denotes the usual $M$-Whittaker function. For $m \geq 1$ with $m \equiv 0, 1 \pmod{4}$, let
\[\varphi_{-m, s}(z) := \mathcal{M}_s(-4\pi m \mathrm{Im}(z))e(-m\mathrm{Re}(z)).\]
Now, define the Poincar\'e series of weight $\frac{3}{2}$
$$\mathcal{F}_N(-m, s;z):=\sum\limits_{A\in \Gamma_{\infty}\backslash\Gamma_0(4N)} (\varphi_{-m, s}|_{\frac{3}{2}}A)(z),$$ 
which converges for $\mathrm{Re}(s) > 1$.

\begin{remark}
The slash operator for half-integral weight modular forms requires a slight modification. For $A = \left(\begin{array}{cc} \alpha & \beta \\ \gamma & \delta \end{array}\right) \in \Gamma_0(4)$, we define
\[(f \vert_{k} A) := \left(\left( \frac{\gamma}{\delta}\right)\epsilon_{\delta}^{-1}(\gamma z + \delta)^{\frac{1}{2}} \right)^{-2k}f(Az),\]
where
\[\epsilon_{\delta} := \begin{cases}1 & \text{if $ \delta \equiv 1 \pmod{4}$} \\ i & \text{if $\delta \equiv 3 \pmod 4.$} \end{cases}\]
\end{remark}

For the special value $s = \frac{3}{4}$, we define 
$$F_{N}(-m;z):=\frac{3}{2}\mathcal{F}_{N}(-m, \tfrac{3}{4}; z)|\text{pr}_1$$
to be the projection of $\mathcal{F}_{N}(-m, \frac{3}{4};z)$ into the plus space using Kohnen's projection operator $\text{pr}_{\lambda}$ (defined in \cite{kohnen}). Note that this definition requires analytic continuation.
The $F_{N}(-m;z)$ are weak Maass forms. Let $\varpi(N)$ be the index of $\Gamma_0(N)$ in the full modular group, and let $\beta(s):=\int\limits_1^{\infty} t^{-3/2}e^{-st}dt$. Set
\[\delta_{\square}(m) := \begin{cases} 1 & \text{if $m$ is square,} \\ 0 & \text{otherwise.}\end{cases}\]
 Theorem 2.1 of ~\cite{MP} shows that $F_N(-m;z)$ has a Fourier expansion of the form
\begin{equation} \label{Fexpansion}
F_{N}(-m;z)=q^{-m} + \! \! \! \! \sum\limits_{\substack{ n\geq 0 \\ n \equiv 0, 3 \! \! \! \! \pmod4}} \! \! \! \! b_{N}(-m;n)q^n
-\frac{3\delta_{\square}(m)}{2\pi\varpi(N)\sqrt{y}} \sum_{n = -\infty}^{\infty} \beta(4\pi n^2 y)q^{-n^2}
\end{equation}
and gives formulas for the coefficients $b_N(-m;n)$ as infinite sums of Kloosterman sums weighted by Bessel functions. The next lemma relates the forms $F_{N}(-m;z)$ to a family of weakly holomorphic modular forms.

Let $H_N(D)$ be the \textit{generalized class numbers} defined by
\begin{equation}
H_N(D) := \sum_{Q \in \mathcal{Q}_D^N/\Gamma_0(N)} \frac{1}{w_{Q, N}} \qquad \text{and} \qquad H_N(0) := \frac{-\varpi(N)}{12}.
\end{equation}
The numbers $H_{1}(D)$ are well-known to be coefficients of Zagier's non-holomorphic Eisenstein series of weight $\frac{3}{2}$, defined by
\begin{equation} \label{defG}
G(z):=\sum\limits_{n=0}^{\infty} H_1(n)q^n+ \frac{1}{16\pi\sqrt{y}}\sum\limits_{n=-\infty}^{\infty}\beta(4\pi n^2y)q^{-n^2}.\end{equation}

\begin{Lemma} \label{defFtilde}
Let $N \in \{1, 3, 5, 7, 13\}$ and let $m$ be a positive integer satisfying $m \equiv 0, 1 \pmod{4}$. Then
\begin{equation*}
\widetilde{F}_N(-m;z) := F_N(-m;z) + \frac{24\delta_{\square}(m)}{\varpi(N)}\sum_{n=0}^{\infty}H_1(n)q^n
\end{equation*}
is a weakly holomorphic modular form of weight $\frac{3}{2}$ on $\Gamma_0(4N)$. If $N > 1$, then
\[\widetilde{F}_N(0;z) := \sum_{n=0}^{\infty}(2H_1(n) - H_N(n))q^n\]
is a holomorphic modular form of weight $\frac{3}{2}$ on $\Gamma_0(4N)$. 
\end{Lemma}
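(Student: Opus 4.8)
The plan is to obtain each $\widetilde{F}_N(-m;z)$ from the weak Maass form $F_N(-m;z)$ by adding a multiple of Zagier's non-holomorphic Eisenstein series $G(z)$ chosen to annihilate the non-holomorphic tail appearing in \eqref{Fexpansion}. Both $F_N(-m;z)$ and $G(z)$ are real-analytic modular forms of weight $\tfrac{3}{2}$ on $\Gamma_0(4N)$ (note $G$ lives on $\Gamma_0(4) \subseteq \Gamma_0(4N)$), so any linear combination is again such a form; the content is to arrange that the combination be holomorphic, in which case, having poles supported only at the cusps, it is weakly holomorphic.

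For the first assertion I would argue by cases on $\delta_{\square}(m)$. If $m$ is not a square then $\delta_{\square}(m)=0$, the tail in \eqref{Fexpansion} is absent, and $\widetilde{F}_N(-m;z)=F_N(-m;z)=q^{-m}+\sum_{n\ge 0}b_N(-m;n)q^n$ is already weakly holomorphic. If $m$ is a square then $\delta_{\square}(m)=1$, and comparing \eqref{Fexpansion} with \eqref{defG}, the non-holomorphic part of $F_N(-m;z)$ is $-\tfrac{3}{2\pi\varpi(N)\sqrt{y}}\sum_n\beta(4\pi n^2 y)q^{-n^2}$, while that of $\tfrac{24}{\varpi(N)}G(z)$ is $\tfrac{24}{16\pi\varpi(N)\sqrt{y}}\sum_n\beta(4\pi n^2 y)q^{-n^2}=\tfrac{3}{2\pi\varpi(N)\sqrt{y}}\sum_n\beta(4\pi n^2 y)q^{-n^2}$. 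These cancel, so $F_N(-m;z)+\tfrac{24}{\varpi(N)}G(z)$ is holomorphic with $q$-expansion $q^{-m}+\sum_{n\ge 0}\bigl(b_N(-m;n)+\tfrac{24}{\varpi(N)}H_1(n)\bigr)q^n$, which is exactly the $q$-series defining $\widetilde{F}_N(-m;z)$. Being holomorphic with a pole only at $\infty$, it is weakly holomorphic of weight $\tfrac{3}{2}$ on $\Gamma_0(4N)$.

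For the second assertion I would treat $m=0$ (so $\delta_{\square}(0)=1$) in the same spirit. The series $\sum_n H_N(n)q^n$ is the holomorphic part of a weight-$\tfrac{3}{2}$ harmonic Maass--Eisenstein series $\mathcal{E}_N(z)$ on $\Gamma_0(4N)$ whose coefficients are the generalized class numbers; this is the Eisenstein instance of the Miller--Pixton machinery, where the trace of the constant polynomial $P_0^{(N)}=1$ over $\mathcal{Q}_D^N/\Gamma_0(N)$ is $H_N(D)$ by definition and where $H_N(0)=-\varpi(N)/12$ fixes the constant term. Writing $\widetilde{F}_N(0;z)=2G(z)-\mathcal{E}_N(z)$ at the level of holomorphic parts, the claim is that the full combination $2G-\mathcal{E}_N$ has vanishing non-holomorphic part, i.e. $\xi_{3/2}\mathcal{E}_N=2\,\xi_{3/2}G$. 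Granting this, $2G-\mathcal{E}_N$ is weakly holomorphic; since both $G$ and $\mathcal{E}_N$ are Eisenstein and hence regular at $\infty$ (no principal part) and bounded at the remaining cusps of $\Gamma_0(4N)$, the combination has no poles at all, so it is a genuine holomorphic modular form of weight $\tfrac{3}{2}$ with $q$-expansion $\sum_{n\ge 0}(2H_1(n)-H_N(n))q^n$.

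The main obstacle is the shadow computation $\xi_{3/2}\mathcal{E}_N=2\,\xi_{3/2}G$. Since $\xi_{3/2}G$ is a constant multiple of the weight-$\tfrac{1}{2}$ theta series $\theta(z)=\sum_n q^{n^2}$ on $\Gamma_0(4)$, while a priori $\xi_{3/2}\mathcal{E}_N$ can involve both $\theta(z)$ and $\theta(Nz)$ in $M_{1/2}(\Gamma_0(4N))$, one must verify both that the $\theta(Nz)$ component vanishes and that the $\theta(z)$ component carries exactly the factor $2$; equivalently, that the non-holomorphic tail of $\mathcal{E}_N$ is twice that of $G$. I would extract this either from the explicit Kloosterman-sum formula for the Eisenstein coefficients in Theorem 2.1 of \cite{MP}, matching the square-indexed coefficients of $H_N$ against those of $2H_1$, or structurally from the normalization $H_N(0)=-\varpi(N)/12$. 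The remaining verifications, namely the modular transformation on $\Gamma_0(4N)$ and regularity at the non-infinite cusps, are inherited directly from the corresponding properties of the Maass--Eisenstein series $G$ and $\mathcal{E}_N$.
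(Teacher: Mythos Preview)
Your proposal is correct and follows essentially the same approach as the paper: for the first assertion you cancel the non-holomorphic tail of $F_N(-m;z)$ against the appropriate multiple of Zagier's $G(z)$, and for the second you posit a level-$N$ non-holomorphic Eisenstein series $\mathcal{E}_N$ with holomorphic part $\sum H_N(n)q^n$ and the same shadow as $2G$, so that $2G-\mathcal{E}_N$ is holomorphic. The only difference is one of attribution: the paper sources the existence of $\mathcal{E}_N$ and the matching of its non-holomorphic part to that of $2G$ by appeal to Chapter~2 of Hirzebruch--Zagier \cite{HZ}, whereas you frame it via the Miller--Pixton Eisenstein case and a $\xi_{3/2}$-computation---but neither you nor the paper actually carries out the shadow verification in detail, so the level of rigor is comparable.
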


\begin{proof}
Looking at equations \eqref{Fexpansion} and \eqref{defG}, we see that the non-holomorphic parts of $F_N(-m;z)$  and $\frac{24\delta_{\square}(m)}{\varpi(N)}G(z)$ cancel each other, so $F_N(-m;z) + \frac{24\delta_{\square}(m)}{\varpi(N)}G(z)$ is a weakly holomorphic modular form equal to $\widetilde{F}_N(-m;z)$.

Arguing as in Chapter 2 of ~\cite{HZ}, one can confirm modularity properties of the $H_N(D)$ for general $N$ by finding further non-holomorphic Eisenstein series with coefficients equal to $H_N(D)$ whose non-holomorphic parts are also period integrals of the $\theta$-function. Subtracting these forms from $2G(z)$ cancels the non-holomorphic parts, leaving a holomorphic form equal to $\widetilde{F}_N(0;z)$.
\end{proof}

Let $\widetilde{b}_N(-m;n)$ be the coefficient of $q^n$ in $\widetilde{F}_N(-m;z)$ so that
\[\widetilde{F}_N(-m;z) = q^{-m} + \sum_{n \equiv 0, 3 \! \! \! \! \pmod{4}}\widetilde{b}(-m;n)q^n.\]

The following theorem describes traces of polynomials in Hauptmoduln in terms of the coefficients $\widetilde{b}_N(-m;n)$. It is a more-explicit version of Theorem 1.1 of \cite{MP}, and generalizes Theorem 1.2 of \cite{BO}.
This makes it possible to calculate these traces from the weakly holomorphic modular forms $\widetilde{F}_N(-m; z)$ given in the Appendix.

\begin{Theorem}
Let $N \in \{1, 3, 5, 7, 13\}$. For each positive integer $D$ with $D \equiv 0, 3 \pmod 4$, we have
\[\mathrm{Tr}_{\nu}^{(N)}(D) =-\nu \sum\limits_{d|\nu} \frac{1}{d}\left(\widetilde{b}_{\frac{N}{(N,d)}}(\tfrac{-\nu^2}{d^2};D) - \frac{24}{\varpi(N)}H_{1}(D)\right) - H_N(D)\mathfrak{c}_{N, \nu}.\]
Furthermore, $\widetilde{F}_N(0;z)$ and $\widetilde{F}_N(-1;z)$ satisfy the identities listed in the Appendix.
\end{Theorem}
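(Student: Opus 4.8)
The plan is to deduce the trace formula from Miller and Pixton's Theorem 1.1 by using Lemma \ref{relate} to identify $P_{\nu}^{(N)}(j^{(N)}(z))$ with their integral weight Niebur Poincar\'e series, and then to convert the resulting weak Maass form coefficients into the weakly holomorphic coefficients $\widetilde{b}_N$ via Lemma \ref{defFtilde}. First I would substitute the identity $P_{\nu}^{(N)}(j^{(N)}(z)) = 2\mathfrak{F}_{1, N, \nu}(z) - \mathfrak{c}_{N, \nu}$ of Lemma \ref{relate} into the definition of $\mathrm{Tr}_{\nu}^{(N)}(D)$. Since $\mathfrak{c}_{N, \nu}$ is a constant, the contribution of the $-\mathfrak{c}_{N, \nu}$ summand is $-\mathfrak{c}_{N, \nu}\sum_{Q} 1/w_{Q, N} = -\mathfrak{c}_{N, \nu} H_N(D)$ by the definition of the generalized class number. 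This peels off the final term of the theorem and reduces everything to evaluating the weighted trace $2\sum_{Q} \mathfrak{F}_{1, N, \nu}(\alpha_Q)/w_{Q, N}$.

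Next I would invoke Theorem 1.1 of \cite{MP}, whose content is exactly that this weighted trace of $\mathfrak{F}_{1, N, \nu}$ is governed by the Fourier coefficients $b_{N'}(-m; D)$ of the weight $\frac{3}{2}$ Poincar\'e series $F_{N'}(-m; z)$. The Shimura-type correspondence underlying their theorem is what produces the divisor sum $\sum_{d \mid \nu} \frac{1}{d}$ together with the squared indices $-\nu^2/d^2$ and the lowered levels $N' = N/(N,d)$; because each $N$ here is $1$ or prime, $(N,d)$ is either $1$ or $N$, so $N'$ is either $N$ or $1$. Finally I would pass from the weak Maass form coefficients $b_{N'}$ to the weakly holomorphic coefficients $\widetilde{b}_{N'}$ using Lemma \ref{defFtilde}: every index $\nu^2/d^2$ is a perfect square, so $\delta_{\square} \equiv 1$ and $\widetilde{b}_{N'}(-m; D) = b_{N'}(-m; D) + \frac{24}{\varpi(N')} H_1(D)$, which is the source of the correction term $-\frac{24}{\varpi(N)} H_1(D)$ sitting inside the divisor sum.

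The hard part will be the bookkeeping in this last conversion. One must track the non-holomorphic Eisenstein contributions all the way through Miller and Pixton's formula and reconcile the level-dependent normalization $\varpi(N/(N,d))$ coming from Lemma \ref{defFtilde} with the single factor $\varpi(N)$ appearing in the statement, treating separately the divisors $d$ with $N \nmid d$ (where $N' = N$ and the normalizations already agree) and those with $N \mid d$ (where $N' = 1$ and a compensating $H_1(D)$ term from the trace formula must be accounted for). Correctly aligning Miller and Pixton's normalization conventions for $\mathfrak{F}_{1, N, \nu}$ and $F_{N'}(-m;z)$ with the ones fixed here is where the numerical factors $\nu$ and $\frac{1}{2}$ are pinned down.

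For the final assertion, once Lemma \ref{defFtilde} has fixed the weight, level $\Gamma_0(4N)$, plus-space membership, and principal parts of $\widetilde{F}_N(0;z)$ and $\widetilde{F}_N(-1;z)$, each claimed Appendix identity is an equality of weakly holomorphic forms in a single finite-dimensional space. It therefore suffices to match the prescribed principal parts together with finitely many further Fourier coefficients, the number required being bounded by the valence formula; I would verify these by direct computation.
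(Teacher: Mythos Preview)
Your approach to the trace formula is essentially identical to the paper's: split $P_{\nu}^{(N)}(j^{(N)})$ via Lemma~\ref{relate}, pull off the constant as $-\mathfrak{c}_{N,\nu}H_N(D)$, apply Theorem~1.1 of \cite{MP} to the Poincar\'e-series trace, and then swap $b$ for $\widetilde{b}$ using Lemma~\ref{defFtilde}. The $\varpi(N)$ versus $\varpi(N/(N,d))$ bookkeeping you flag as ``the hard part'' is indeed glossed over in the paper as well, so you are not missing anything there.

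For the Appendix identities, the paper adds one ingredient your outline omits. Your plan is to compare finitely many Fourier coefficients, but the coefficients of $\widetilde{F}_N(-1;z)$ are only given a priori as infinite Kloosterman--Bessel sums (via \cite{MP}, Theorem~2.1), so ``direct computation'' of exact values is not immediate. The paper first uses the already-established trace formula at $\nu=1$---where the left side is an algebraic integer up to a factor of $2$ or $3$---to conclude that each $\widetilde{b}_N(-1;n)$ is rational with bounded denominator; only then do the convergent Kloosterman sums pin them down exactly via sufficiently precise partial sums. After that, the finite-dimensional comparison (multiplying by a cusp form such as $\eta(4z)^6$ and invoking dimension formulas) proceeds as you describe.
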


\begin{proof}
Using Theorem 1.1 of ~\cite{MP} together with Lemma \ref{relate}, we find
\begin{align*}
\mathrm{Tr}_{\nu}^{(N)}(D) &=
\sum_{Q \in \mathcal{Q}_D^N/\Gamma_0(N)} \! \! \frac{P_{\nu}^{(N)}(j^{(N)}(\alpha_Q)) }{w_{Q, N}} \\
&= 2 \! \! \sum_{Q \in \mathcal{Q}_D^N/\Gamma_0(N)} \frac{\mathfrak{F}_{1, N, \nu}(\alpha_Q)}{w_{Q, N}} -  \! \! \sum_{Q \in \mathcal{Q}_D^N/\Gamma_0(N)} \! \! \frac{\mathfrak{c}_{N, \nu}}{w_{Q, N}} \\
&= -\nu \sum\limits_{d|\nu} \frac{1}{d}b_{\frac{N}{(N,d)}}(\tfrac{-\nu^2}{d^2};D) - H_N(D)\mathfrak{c}_{N, \nu},
\end{align*}
Applying the definition of $\widetilde{F}_N(-m;z)$ in Lemma \ref{defFtilde} gives the above formula. 

Hauptmoduln take on algebraic-integer values at Heegner points, so when $\nu = 1$ the left-hand side of the formula for the trace is $\frac{1}{2}$ or $\frac{1}{3}$ times an integer. Since the constants appearing on the right-hand side are rational with bounded denominator, so are the coefficients $\widetilde{b}_N(-1;n)$. Since the Kloosterman sums in Miller and Pixton's Theorem 2.1 converge, the coefficients of $\widetilde{F}_N(-1;z),$ can be determined from these formulas by taking sufficiently large partial sums. A standard argument then shows that the identities in the Appendix hold after comparing finitely many coefficients. Since $\widetilde{F}_N(-1;z)$ is a weakly holomorphic modular form, multiplying by an appropriate cusp form, for example $\eta(4z)^{6}$, lands the product in a space of holomorphic modular forms. These spaces are finite-dimensional, so one need only check that finitely many coefficients in their $q$-expansions agree.
The number of coefficients one has to check is determined by the dimension-formulas given in Theorem 1.34 of ~\cite{ken}.
\end{proof}

\begin{remark}
It should be pointed out that the theory of half-integral weight modular forms is particularly difficult for levels $4N$ when $N$ is even or not-square free. However, for $N=2$ we observe that the definitions in Lemma \ref{defFtilde} still give weakly holomorphic modular forms (equal to those listed in the Appendix), and the formula for the traces in the above theorem still holds.
\end{remark}

\section{Algorithm for Computing $\mathcal{H}_D^{(N)}(x)$} \label{algsec}
Recall that for $\alpha_Q$ a Heegner point of level $N$ and discriminant $-D$, the minimal polynomial of $j^{(N)}(\alpha_Q)$  is given by 
$$\mathcal{H}_D^{(N)}(x)=\prod_{Q\in\mathcal{Q}_D/\Gamma_0(N)} (x-j^{(N)}(\alpha_Q)).$$ 
It is clear from this description that the coefficients of $\mathcal{H}_D^{(N)}(x)$ are the elementary symmetric polynomials in the $j^{(N)}(\alpha_Q)$. The following algorithm explains how to compute these class polynomials for $N = 1, 2, 3, 5, 7, 13$ and $-D$ a fundamental discriminant.

\vspace{.1in}
\noindent
\textbf{Algorithm}
\begin{enumerate}
\item Recursively generate the weakly holomorphic modular forms $\widetilde{F}_N(-m; z)$ for $1 \leq m \leq |\mathcal{Q}_D^N/\Gamma_0(N)|^2$ using the forms $\widetilde{F}_{N}(0;z)$ and $\widetilde{F}_N(-1;z)$ given in the Appendix.\label{comp}
\item Use Theorem \ref{traces} to calculate the traces $\text{Tr}_{\nu}^{(N)}(D)$ for $1 \leq \nu \leq |\mathcal{Q}_D^N/\Gamma_0(N)|$ by plugging in the appropriate coefficients $\widetilde{b}_N(-m;D)$ and constants $H_N(D)$, $H_1(D),$ and $\mathfrak{c}_{N, \nu}$ (and multiplying by $2$ or $3$ if $D = 3$ or $4$ respectively). \label{plugin}
\item Use the generating function in equation \eqref{defP} to obtain the coefficients of $P_{\nu}^N(x)$ for each $1 \leq \nu \leq |\mathcal{Q}_D^N/\Gamma_0(N)|$, and diagonalize to determine the values of the power sums $\sum (j^{(N)}(\alpha_Q))^{\nu}$.
\item Apply the Newton-Girard formulae
to recursively determine the elementary symmetric polynomials in $j^{(N)}(\alpha_Q)$. 
\end{enumerate}

\begin{proof}[Proof of Algorithm]
Step \ref{comp} can be accomplished by multiplying $\widetilde{F}_N(-m+4,z)$ by $j^{(N)}(4z)$ to obtain a non-trivial linear combination of $\widetilde{F}_N(-m;z)$ and $\widetilde{F}_N(-n;z)$ for $n<m$. Subtracting off appropriate multiples of $\widetilde{F}_N(-n;z)$ for $0 \leq n < m$ leaves a form with the same non-positive Fourier coefficients as $\widetilde{F}_N(-m;z)$. The two forms must then be equal because their difference is in $M_{\frac{3}{2}}(\Gamma_0(4N))$ with a zero constant term and hence zero.

In step \ref{plugin}, the constant $\mathfrak{c}_{N, \nu}$ can be computed from the formula in Lemma \ref{relate}, using the identities in Lemma \ref{cool} to express the infinite sums in terms of reciprocals of the Riemann-zeta function.

Finally, given the $i$th power sums $p_i(x_1, \ldots, x_n) = x_1^i + \ldots + x_n^i$, the Newton-Girard formulae recursively determine the elementary symmetric polynomials $e_k(x_1, \ldots, x_n)$ by
\[ke_k(x_1, \ldots, x_n) = \sum_{i=1}^k(-1)^{i-1}e_{k-i}(x_1, \ldots, x_n)p_i(x_1, \ldots, x_n). \qedhere \]
\end{proof}

\section{Explicit Numerical Examples}
We show how to compute the class polynomial for level $7$ and discriminant $-20$.
The following table lists representatives of binary quadratic forms for this level and discriminant along with an approximation (which required 1500 coefficients) of the Hauptmodul $j^{(7)}(z)$ evaluated at their roots:
\begin{center}
\begin{tabular}{|c|c|}
\hline
$Q\in\mathcal{Q}^7_{20}/\Gamma_0(7)$ & approximation of $j^{(7)}(\alpha_Q)$\\
\hline
$14x^2 + 6xy + y^2$ & $-4.1458\ldots + i1.2360\ldots $\\
\hline
$21x^2 + 8xy + y^2$ &  $-4.1458\ldots - i1.2360\ldots $\\
\hline
$7x^2 + 6xy + 2y^2$ & $-10.8541\ldots + i3.2360\ldots $\\
\hline
$63x^2 + 22xy + 2y^2$ & $-10.8541\ldots -i 3.2360\ldots $ \\
\hline
\end{tabular}
\end{center}

Using the seed forms given in the Appendix, we recursively compute the Fourier expansions of $\widetilde{F}_7(-m;z)$ for $m \leq 16$ by the method described in the proof of the algorithm.
In particular, we find the coefficients
\[\widetilde{b}_7(-1;20) = 22 \qquad \widetilde{b}_7(-4;20) = -26 \qquad \widetilde{b}_7(-9;20) = 78 \qquad \widetilde{b}_7(-16;20) = 338.\]
Next, we compute the constants $\mathfrak{c}_{7, \nu}$ for $1 \leq \nu \leq 4$ using the formula in Lemma \ref{relate} along with the identities in Lemma \ref{cool} to evaluate the infinite sums:
\[\mathfrak{c}_{7,1}=-\tfrac{1}{2} \qquad \mathfrak{c}_{7,2}=-\tfrac{3}{2} \qquad \mathfrak{c}_{7,3}=-2 \qquad \mathfrak{c}_{7,4}=-\tfrac{7}{2}.\]
We have $\frac{24}{\varpi(7)}H_{1}(20) = 6$ and $H_{7}(20) = 4$. Applying the formula in Theorem \ref{traces}, we obtain the traces
\begin{align*}
\text{Tr}_{1}^{(7)}(20) =-14 \qquad \text{Tr}_{2}^{(7)}(20) =54 \qquad \text{Tr}_{3}^{(7)}(20) =-224 \qquad \text{Tr}_{4}^{(7)}(20) =-1266.
\end{align*}
The polynomials $P_{\nu}^{(7)}(x)$ can be determined explicitly by expanding equation \eqref{defP} in a formal power series over the polynomial ring in $x$ and taking the coefficient of $q^{\nu}$:
\begin{align*}
P_{1}^{(7)}(x) = x+4 \qquad P_{2}^{(7)}(x)& = x^2+8x+12 \qquad P_{3}^{(7)}(x) = x^3+12x^2+42x+16 \\
 P_{4}^{(7)}(x) &= x^4+16x^3+88x^2+160x+28
\end{align*}
The power sums can now be determined by diagonalization, giving
\begin{align*}
\sum_{Q \in \mathcal{Q}_{20}^7/\Gamma_0(7)}j_{(7)}(\alpha_Q) &= -30 &\qquad 
\sum_{Q \in \mathcal{Q}_{20}^7/\Gamma_0(7)}(j_{(7)}(\alpha_Q))^2 &= 246 \\
\sum_{Q \in \mathcal{Q}_{20}^7/\Gamma_0(7)}(j_{(7)}(\alpha_Q))^3 &=  -1980 &\qquad
\sum_{Q \in \mathcal{Q}_{20}^7/\Gamma_0(7)}(j_{(7)}(\alpha_Q))^4 &= 13454.
\end{align*}
Finally, the elementary symmetric polynomials can be recovered recursively using the Newton-Girard formulae, thus determining  the class polynomial:
$$\mathcal{H}_{20}^{(7)}(x)=x^4+30x^3 +327x^2-1470x+2401.$$
We factor this polynomial as
\begin{align*}
&\left(x + \tfrac{15}{2} - \tfrac{3\sqrt{5}}{2} - i\sqrt{2(3 - \sqrt{5})}\right)
\left(x + \tfrac{15}{2} - \tfrac{3\sqrt{5}}{2} + i\sqrt{2(3 - \sqrt{5})}\right) \\
&\qquad \qquad \qquad \times \left(x + \tfrac{15}{2} + \tfrac{3\sqrt{5}}{2} + i\sqrt{2(3+\sqrt{5})}\right)
\left(x + \tfrac{15}{2} + \tfrac{3\sqrt{5}}{2} - i\sqrt{2(3+\sqrt{5})}\right),
\end{align*}
confirming the numerical approximations at the beginning of the section.

\section{Appendix}

Here we give explicit closed formulas for the seed functions $\widetilde{F}_{N}(0;z)$ and $\widetilde{F}_{N}(-1;z)$ for $N = 1, 2, 3, 5, 7, 13$. We write $E_4(z)$ for the Eisenstein series of weight $4$ and $E_2(z)$ for the Eisenstein series of weight $2$. Also, let
\[\theta(z) = \sum_{n \in \mathbb{Z}}q^{n^2} \qquad \text{and} \qquad \theta_1(z) = \sum_{n \in \mathbb{Z}}(-1)^nq^{n^2}\]
be the standard theta functions.
When the space of cusp forms of weight $k$ and level $N$ is one-dimensional, we will write $S_{k}^{(N)}(z)$ for the unique normalized form in this space. We also require the following forms:
\begin{align*}
S_2^{(26, +)}(z)& &&\text{the sum of the two newforms of weight $2$ on $\Gamma_0(26)$} \\
S_2^{(26, -)}(z)& &&\text{the difference of the two newforms of weight $2$ on $\Gamma_0(26)$ which} \\
&&&\text{begins $-2q^2 + 4q^3 - 2q^5 + \ldots$} \\
S_4^{(13, 1)}(z) \ & &&\text{the newform of weight $4$ on $\Gamma_0(13)$ with rational coefficients} \\
S_4^{(13, +)}(z)& &&\text{the sum of the two newforms of weight $4$ on $\Gamma_0(13)$ with coefficients} \\
&&&\text{in the field with defining polynomial $x^2 - x - 4$.}
\end{align*}
We can now list the following forms:
\begin{align*}
\widetilde{F}_{2}(0; z) &=  \frac{24E_2(8z) - 14E_2(4z) + 3E_2(2z) - E_2(z)}{144 \theta(z)} \\ 
\widetilde{F}_{3}(0;z) &=  \frac{24E_2(12z) - 9E_2(6z) - 8E_2(4z) + 3E_2(3z) + 3E_2(2z) - E_2(z)}{72\theta(z)} \\
\widetilde{F}_{5}(0;z) &= 
\frac{40E_2(20z)- 15E_2(10z) + 5E_2(5z)- 8E_2(4z)  + 3E_2(2z)  -E_2(z)  + 24S_2^{(20)}(z)}{72\theta(z)} \\
\widetilde{F}_{7}(0;z) &= \frac{12E_2(28z) -11E_2(14z)+13E_2(7z)+4E_2(4z) -7E_2(2z) +E_2(z)}{24\theta(z)} \\
&\qquad  - \frac{\eta(2z)^7\eta(14z)^7}{2\theta(z)\eta(z)^3\eta(4z)^2\eta(7z)^3\eta(28z)^2} 
+ \frac{5\eta(z)\eta(4z)^2\eta(14z)^{17}}{2\theta(z)\eta(2z)^3\eta(7z)^7\eta(28z)^6} \\ \\
\widetilde{F}_{13}(0;z) &= \frac{104E2(52z)-39E_2(26z) + 13E_2(13z) - 8E_2(4z) + 3E_2(2z) - E_2(z)}{72\theta(z)} \\
&\qquad + \frac{2S_2^{(26, +)}(z) + 2S_2^{(26, -)}(2z) + S_2^{(52)}(z)}{3\theta(z)} 
\\
\widetilde{F}_{1}(-1;z) &= \theta_1(z) \frac{E_4(4z)}{\eta(4z)^6} \\
\widetilde{F}_{2}(-1;z) &= \theta_1(z) \frac{16E_4(8z) - E_4(4z)}{15\eta(4z)^6} + 16F_{2}(0; z) \\
\widetilde{F}_{3}(-1;z) &= \theta_1(z) \frac{81E_4(12z) - E_4(4z)}{80\eta(4z)^6} + 9F_{3}(0;z) 
\\
\widetilde{F}_{5}(-1;z) &= \theta_1(z) \frac{\eta(4z)^4}{\eta(20z)^2} + 5F_{5}(0;z) 
\end{align*}
\begin{align*}
\widetilde{F}_{7}(-1;z) &= \theta_1(z) \frac{2401E_4(28z) - E_4(4z) - 11760S_4^{(7)}(4z)}{2400\eta(4z)^6} + \frac{7}{2}F_{7}(0;z) \\
\widetilde{F}_{13}(-1;z) &= 
\frac{\theta_1(z)}{\eta(4z)^6}
\left(-\frac{137E_4(4z)}{14280} -\frac{ 2197E_4(52z)}{3570} + \frac{13(13E_2(52z) - E_2(4z))^2}{1152}\right) \\
&\qquad -\frac{\theta_1(z)}{\eta(4z)^6}\left( \frac{39S_4^{(13, 1)}(4z)}{14} + \frac{143S_4^{(13, +)}(4z)}{34}\right) + \frac{13}{7}F_{13}(0;z)
\end{align*}

\begin{remark}
There is no holomorphic modular form of weight $\frac{3}{2}$ when $N = 1$, so $\widetilde{F}_{1}(-4;z)$ is required to recursively generate the $\widetilde{F}_1(-m;z)$ for all $m$. Zagier explains how to obtain $\widetilde{F}_{1}(-4;z)$ from $\widetilde{F}_1(-1;z)$ in the discussion preceding Theorem 4 of ~\cite{traces}.
\end{remark}
\bibliographystyle{plain}
\bibliography{billy}{}

\end{document}